\documentclass[11pt]{article}

\usepackage{epsfig}
\usepackage{amsmath,amssymb,amsthm,amsxtra}
\usepackage{amsfonts}
\usepackage{graphicx} 
\usepackage{mathrsfs}
\usepackage{setspace} 

\onehalfspacing

\newcommand{\G}{\Gamma}
\newcommand{\ep}{\varepsilon}

\usepackage{float}
\usepackage[margin=0pt,font=small,labelfont=bf,textfont=it]{caption}

\theoremstyle{plain}

\newtheorem{Theorem}{Theorem}[section]	

\newtheorem{Lemma}[Theorem]{Lemma}
\newtheorem{Definition}[Theorem]{Definition}

\title{On the $\Gamma$-limit for a non-uniformly bounded sequence of
two phase metric functionals} 

\author{Hartmut Schwetlick\footnote{E-Mail: schwetlick@maths.bath.ac.uk}, Daniel C. Sutton\footnote{E-Mail: d@csutton.eu} and Johannes Zimmer\footnote{E-Mail: zimmer@maths.bath.ac.uk}\\Department of Mathematical Sciences\\
University of Bath\\
Bath BA2 7AY, U.K.}

\date{December 2013}

\begin{document}

\maketitle

\begin{abstract}
In this study we consider the $\G$-limit of a highly oscillatory Riemannian metric length functional as its period tends to 0. The metric coefficient takes values in either $\{1,\infty\}$ or $\{1,\beta \ep^{-p}\}$ where $\beta,\ep > 0$ and $p \in (0,\infty)$. We find that for a large class of metrics, in particular those metrics whose surface of discontinuity forms a differentiable manifold, the $\G$-limit exists, as in the uniformly bounded case. However, when one attempts to determine the $\G$-limit for the corresponding boundary value problem, the existence of the $\G$-limit depends on the value of $p$. Specifically, we show that the power $p=1$ is critical in that the $\G$-limit exists for $p < 1$, whereas it ceases to exist for $p \geq 1$. The results here have applications in both nonlinear optics and the effective description of a Hamiltonian particle in a discontinuous potential.
\end{abstract}

\newpage

\section{Introduction}
Let $\emptyset \ne \Omega_g \subset [0,1]^d$ be an open and path connected set with Lipschitz boundary. Suppose further that $\mathbb{R}^d \setminus (\Omega_g + \mathbb{Z}^d)$ and $\partial \Omega_g$ are path connected. We study the sequence of functionals
\begin{equation}
\label{s1:eq:functional}
F_{p,\ep}(u) := \int_0^1 a_{p,\ep}\left(\frac{u}{\ep}\right)\|u'\| \, \text{d} \tau, \quad u \in W^{1,\infty}\left((0,1)\right),
\end{equation}
where \begin{equation}
a_{p,\ep}(x) := 
\begin{cases}
\beta \ep^{-p} & \text{ if } x \in \Omega_g + \mathbb Z^d,\\
1 & \text{otherwise,}
\end{cases}
\end{equation}
for $\beta > 0$ and $p \in (0,\infty)$. We will also consider a limiting case where $\left. a_{p,\ep} \right|_{p=\infty} : = \infty$ on $\Omega_g + \mathbb Z^d$ and $1$ otherwise.

Functionals of the form \eqref{s1:eq:functional} naturally arise in the study of Riemannian geometry and nonlinear optics. In the case of nonlinear optics the values of $a_{p,\ep}$ describe the \emph{opacity} of the material. Our motivation is to study the effective dynamics of a Hamiltonian particle. In classical mechanics the motion of a particle with unit mass is given by
\begin{equation}
\label{s1:eq:n2l}
\frac{\text{d}^2x}{\text{d} t^2} = -\nabla V(x),
\end{equation}
where $V$ is the potential energy function. An alternative description of the motion of such a particle is given by the \emph{Maupertuis principle} \cite{ba,arnold97a,marsden99a,schwetlick09a}. The Maupertuis principle states that, provided $V$ is sufficiently smooth, that the solutions of \eqref{s1:eq:n2l} are critical points of the functional
\begin{equation}
\int_0^1 \sqrt{2\left(E - V \left(\frac{u}{\ep}\right)\right)}\|u'\| d \tau,
\end{equation}
up to reparameterisation. The functional \eqref{s1:eq:functional} could therefore be interpreted as a model for the motion of a particle in a discontinuous periodic potential. The discontinuities and large metric values in $\Omega_g + \mathbb Z^d$ model positional constraints on the motion of the particle. We are interested in approximating minimum points of the functional \eqref{s1:eq:functional} for small $\ep > 0$ in the weak topology of $W^{1,\infty}\left((0,1)\right)$. Determining the effective description of a Hamiltonian particle in a discontinuous potential has been posed as a problem in \cite{lions88a}. By determining an effective limit for the functional \eqref{s1:eq:functional} we begin to develop an insight into the average motion of a particle in these potentials. This approach provides an alternative to determining an effective description for the Hamilton-Jacobi equations that also describe the motion of a Hamiltonian particle. The homogenisation of the Hamilton-Jacobi equation was first studied in \cite{lions88a}.

Effective descriptions for Riemannian metrics that satisfy uniform growth conditions have been studied in \cite{acerbi84a,braides02a,braides98a,braides92a,e91a}. A proof that the Riemannian metrics are dense with respect to $\G$-convergence in the Finlser metrics can be found in \cite{braides02b}. Examples of Finsler metrics obtained as the $\G$-limit of a sequence of two phase Riemannian metrics may be found in \cite{amar09a,braides98a,buttazzo01a,concordel97a,c03,oberman09a,sutton13a,suttontese}. The difference between the work in \cite{amar09a,braides98a,buttazzo01a,oberman09a,sutton13a,suttontese} and here is that our metrics are not uniformly equivalent, in $\ep$, to the Euclidean distance on $\mathbb{R}^d$. Specifically, for a fixed $p$, there does not exist $\alpha, \beta > 0$ such that
\begin{equation}
\int_0^1 \alpha \| u'(\tau) \| \, \text{d}\tau \leq F_{p,\ep}(u) \leq \int_0^1 \beta \| u'(\tau) \| \, \text{d}\tau
\end{equation}
for all $\ep > 0$. Consequently, in this setting, the standard theory does not apply. 

In this paper we will focus on the computation of the $\G$-limit of \eqref{s1:eq:functional}. Specifically we will show that the $\G$-limit of \eqref{s1:eq:functional} on $W^{1,\infty}\left((0,1)\right)$ exists. Furthermore, we show that this limit can be described by the $\G$-limit of a sequence of uniformly bounded length functionals. We will continue by studying the $\G$-limit of the corresponding boundary value problem. In the literature one usually finds that the $\G$-limit for the boundary value problem follows from the $\G$-limit for the unconstrained problem \cite{braides02a,braides98a}. The novel observation that we make here is that in the absence of a uniform growth condition the $\G$-limit may no longer exist. The existence of the $\G$-limit depends on the value for $p$. We show that the value $p=1$ is critical in the sense that for $p < 1$ the $\G$-limit for the boundary value problem exists, whereas for $p \geq 1$ the $\G$-limit does not exist. For $p < 1$ we show that the minimum values of $F_{p,\ep}$ converge and then prove an analogue of \cite{buttazzo01a}, that is, the length functionals $\G$-converge if, and only if, the induced distance functions converge locally uniformly. Contrary, for $p \geq 1$, and with an additional geometric assumption, we show that the $\G$-limit fails to exist. Specifically we present a counter example, that is the existence of two sequences of $\ep$ such that minimisers of \eqref{s1:eq:functional} converge to different limits. By modifying the ideas in \cite{buttazzo01a} we can study the $\G$-convergence of $F_{p,\ep}$. This work is a natural continuation of the examples at the end of \cite{buttazzo01a} to a more general class of problems. Furthermore, this paper highlights the difficulties one may encounter when determining the $\G$-limit of a sequence of non-uniformly bounded functionals. In particular, one has too be very careful when imposing microscopic positional constraints when determining the effective dynamics of a Hamiltonian particle. For a given value for $\beta > 0$, $p \in [0,\infty]$ and $\ep$ sufficiently small, the value of $a_{p,\ep}$ will prevent the particle from entering $\Omega_g + \mathbb{Z}^d$, therefore asymptotically we will always impose a positional constraint on the particle. Our results show that one has to take care in choosing $p$, and hence scale the potential wells in $\ep$, to ensure that the limit problem exists.

\subsection*{Acknowledgements}

DCS was funded by an EPSRC DTA. The authors would like to thank Karsten Matthies and Grigoris Pavliotis for their comments on an earlier draft of this work.

\section{High opacity coefficients}
In this section the notion of a \emph{high opacity coefficient} is formulated. The high opacity coefficient provides a lower bound on the opacity of inclusions that prevents geodesics entering them. Throughout this paper a set $\emptyset \ne \Omega_g \subsetneq [0,1]^d$ is an \emph{admissible inclusion} if it is path connected, open and has a Lipschitz boundary. Furthermore we will assume that $\Omega_g$ has the property that $\Omega_w := \mathbb R^d \setminus \left( \Omega_g + \mathbb Z^d \right)$ and $\partial \Omega_g$ are path connected. For notational convenience set $\mathscr A (x,y) := \{ u \in W^{1,\infty}\left((0,1)\right) \colon u(0) = x, u(1) = y \}$; where $W^{1,\infty}\left((0,1)\right)$ is the space of all Lipschitz curves on $(0,1)$ taking values in $\mathbb R^d$. For $E \subset \mathbb R^d$ nonempty, define the mapping $d_E \colon E \times E \times [0,\infty) \rightarrow \mathbb R$ by
\begin{equation*}
d_E(x,y;\beta) := \inf_{u \in \mathcal A(x,y)} \left\{ \int_0^1 \beta \|u'(\tau) \| \, \text{d}\tau \colon u(\tau) \in E \; \forall \tau \in (0,1) \right\}.
\end{equation*}
To simplify notation further we set $d_E(x,y):=d_E(x,y;1)$.

\begin{Definition}[High Opacity Coefficient]
Let $\Omega_g$ be an admissible inclusion. A high opacity coefficient for the set $\Omega_g$ is a number $\lambda \in (0, \infty)$ such that for all $x,y \in \partial \Omega_g$ and $\beta > \lambda$,
\begin{equation}\label{3:hcc}
d_{\partial \Omega_g}(x,y) < d_{\Omega_g}(x,y;\beta).
\end{equation}
\end{Definition}

The computation of the high opacity coefficient for a square can be found in \cite[Example 16.2]{braides98a}. The high opacity coefficients for other sets have been considered in \cite{concordel97a,c03}. It would be an interesting problem to determine a class of $\Omega_g$ where \eqref{3:hcc} does not hold. It is reasonable to conjecture that if $\partial \Omega_g$ were to have a cusp then \eqref{3:hcc} would fail to hold.

The following lemma provides a sufficient condition on $\Omega_g$ to ensure the existence of a high opacity coefficient.

\begin{Lemma}\label{3:existhcc}
Let $\Omega_g$ be an admissible set. If $\partial \Omega_g$ is connected and differentiable then there exists a high opacity coefficient $\lambda$ for $\Omega_g$.
\end{Lemma}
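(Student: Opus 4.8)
The plan is to show that for a differentiable, connected boundary $\partial\Omega_g$, any path joining two points $x,y\in\partial\Omega_g$ that is forced to stay inside $\overline{\Omega_g}$ must, when weighted by a sufficiently large constant $\beta$, be longer than the cheapest path along $\partial\Omega_g$ itself. The key geometric input is that, because $\partial\Omega_g$ is a compact differentiable manifold, it satisfies a \emph{uniform interior cone / reach} condition: there is a radius $r_0>0$ and a constant $C_0$ such that any two points of $\partial\Omega_g$ at Euclidean distance less than $r_0$ are joined within $\partial\Omega_g$ by a curve of length at most $C_0\|x-y\|$ (this is just the bi-Lipschitz equivalence of the intrinsic and extrinsic metrics on a $C^1$ submanifold, localised, then globalised by compactness to give a finite constant $C_1$ with $d_{\partial\Omega_g}(x,y)\le C_1\|x-y\|$ for all $x,y$).

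First I would record this constant: set $L:=\sup_{x,y\in\partial\Omega_g}d_{\partial\Omega_g}(x,y)<\infty$ and $C_1$ as above. Next, the crucial estimate is a \emph{lower} bound on $d_{\Omega_g}(x,y;\beta)$ that beats $d_{\partial\Omega_g}$. For $x,y$ with $\|x-y\|$ small, any admissible competitor $u\in\mathscr A(x,y)$ with $u(\tau)\in\overline{\Omega_g}$ has Euclidean length at least $\|x-y\|$, so $d_{\Omega_g}(x,y;\beta)\ge\beta\|x-y\|$; comparing with $d_{\partial\Omega_g}(x,y)\le C_1\|x-y\|$ shows \eqref{3:hcc} holds as soon as $\beta>C_1$, \emph{provided} $\|x-y\|>0$. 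For $x,y$ far apart (say $\|x-y\|\ge r_0$) we use instead $d_{\Omega_g}(x,y;\beta)\ge\beta\,r_0$ while $d_{\partial\Omega_g}(x,y)\le L$, so \eqref{3:hcc} holds once $\beta>L/r_0$. Therefore $\lambda:=\max\{C_1,\,L/r_0\}$ is a high opacity coefficient: for every $\beta>\lambda$ and all $x,y\in\partial\Omega_g$ (the case $x=y$ being trivial since the left side is $0$ and the right side is positive as $\Omega_g$ has nonempty interior), inequality \eqref{3:hcc} holds.

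I expect the main obstacle to be making the local bi-Lipschitz comparison between intrinsic and extrinsic distances on $\partial\Omega_g$ fully rigorous under only a differentiability (i.e. $C^1$) hypothesis, and then patching the local constants into a single global $C_1$ via compactness. Concretely: around each $p\in\partial\Omega_g$ one writes $\partial\Omega_g$ as a $C^1$ graph over the tangent hyperplane $T_p\partial\Omega_g$; on a small enough neighbourhood the gradient of the graphing function is close to $0$, giving length distortion close to $1$; a Lebesgue-number argument on the compact cover then yields uniform $r_0$ and $C_1$. A secondary subtlety is ensuring the competitor curves in the definition of $d_{\Omega_g}(x,y;\beta)$ genuinely lie in the \emph{closed} region so that $d_{\Omega_g}(x,y;\beta)\ge\beta\|x-y\|$ is legitimate — this is immediate from the definition of $d_E$ with $E=\Omega_g$, since Euclidean length of any curve from $x$ to $y$ is at least $\|x-y\|$. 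Once these points are in place the argument is exactly the two-regime comparison sketched above.
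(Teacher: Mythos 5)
Your proposal is correct and follows essentially the same route as the paper: the paper pairs the compactness-based bi-Lipschitz bound $d_{\partial \Omega_g}(x,y) \leq C\|x-y\|$ (cited from the literature rather than reproved via local $C^1$ graphs) with the elementary lower bound $d_{\Omega_g}(x,y;\beta) \geq \beta\|x-y\|$ and takes $\lambda = C$. Your second regime for $\|x-y\| \geq r_0$ with the constant $L/r_0$ is redundant, since your global constant $C_1$ already handles all pairs of distinct points.
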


\begin{proof}
By assumption $\partial \Omega_g$ is a differentiable manifold, therefore it can be equipped with a Riemannian metric $d_{\Omega_g}$ \cite[Theorem 1.4.1]{jost05a}. The manifold has the distance function $d_{\partial \Omega_g}$. The distance function is bi-Lipschitz equivalent to the Euclidean distance \cite[Corollary 1.4.1]{jost05a} since $\partial \Omega_g$ is compact. That is to say that there exist positive constants $c$ and $C$ such that,
\begin{equation}\label{3:456}
c\|x-y\| \leq d_{\partial \Omega_g}(x,y) \leq C \|x-y\|,
\end{equation}
for $x,y \in \partial \Omega_g$. It follows trivially that
\begin{equation}
d_{\Omega_g}(x,y;\beta) \geq \inf_{u \in \mathscr A(x,y)} \int_0^1 \beta \|u'(\tau) \| \, \text{d}\tau = \beta \|x - y \|.\label{3:lab1}
\end{equation}
The inequality in \eqref{3:lab1} follows from the fact that $d_{\Omega_g}(x,y;\beta)$ is an infimum over a smaller space than $\mathscr A(x,y)$. Choosing $\beta > \lambda := C$ then combining \eqref{3:456} and \eqref{3:lab1} proves \eqref{3:hcc}.
\end{proof}

The above lemma is sufficient for a broad class of problems, including those that have physical applications in optics and dynamics. For example, in the dynamical context a discontinuity in the potential, such as that considered here, serves as a microscopic positional constraint. Boundaries of $\Omega_g$ of less regularity are still of mathematical interest but are not studied here. 

The following lemma states that if the variation of $a$ is sufficiently large, then minimising curves do not enter the high opacity regions. It will be exactly the high opacity coefficient which provides the measure of what sufficiently large means in this context. 

\begin{Lemma}\label{3:purpose}
Let $\Omega_g$ be an admissible set with a high opacity coefficient $\lambda$. Define the function
\begin{equation}
a(x) := 
\begin{cases}
\beta & \text{ if } x \in \Omega_g + \mathbb Z^d,\\
1 & \text{otherwise,}
\end{cases}
\end{equation}
for $\beta > \lambda$, and the Riemannian length functional
\begin{equation}
L(u) := \int_0^1 a(u(\tau))\|u'(\tau)\| \, \text{d}\tau.
\end{equation}
For any $x,y \in \Omega_w$, let $u$ be a geodesic joining $x$ to $y$. It then follows that $u(\tau) \in \Omega_w$ for all $\tau \in (0,1)$.
\end{Lemma}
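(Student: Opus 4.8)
The plan is to argue by contradiction: suppose $u$ is a geodesic from $x$ to $y$ with $x,y \in \Omega_w$ but $u(\tau_0) \in \Omega_g + \mathbb{Z}^d$ for some $\tau_0 \in (0,1)$. Since $\Omega_w$ is open and $u$ is continuous, the preimage $u^{-1}(\Omega_g + \mathbb{Z}^d)$ is a relatively open subset of $(0,1)$ that is nonempty and whose closure is contained in $(0,1)$ (because $u(0) = x$ and $u(1) = y$ lie in $\Omega_w$). Hence it is a countable disjoint union of open intervals, and I will focus on one such maximal interval $(s,t) \subset (0,1)$ on which $u$ takes values in a single translate $\Omega_g + k$, $k \in \mathbb{Z}^d$; by continuity the endpoints satisfy $u(s), u(t) \in \partial(\Omega_g + k) = \partial \Omega_g + k$, and $u(\tau) \in \Omega_g + k$ for $\tau \in (s,t)$.

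Next I would compare the cost of the portion of $u$ on $[s,t]$ with a competitor that instead travels along $\partial \Omega_g + k$ from $u(s)$ to $u(t)$. The contribution of $u|_{[s,t]}$ to $L(u)$ is exactly $\int_s^t a(u(\tau))\|u'(\tau)\|\,\text{d}\tau$, which, after rescaling the parameter to $(0,1)$, is at least $d_{\Omega_g + k}(u(s), u(t); \beta) = d_{\Omega_g}(u(s) - k, u(t) - k; \beta)$ using that $a \equiv \beta$ on $\Omega_g + k$ and translation invariance. By the high opacity property \eqref{3:hcc} applied with the points $u(s) - k, u(t) - k \in \partial \Omega_g$ and the exponent $\beta > \lambda$, this strictly exceeds $d_{\partial \Omega_g}(u(s)-k, u(t)-k) = d_{\partial \Omega_g + k}(u(s), u(t))$. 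Replacing $u|_{[s,t]}$ by a near-optimal curve in $\partial \Omega_g + k$ (which has $a \equiv 1$, so its length equals its Euclidean/$\partial\Omega_g$-length) thus strictly decreases the total cost, contradicting that $u$ is a geodesic. A minor technical point is that $d_{\partial\Omega_g}$ is an infimum, so I take a competitor within $\varepsilon$ of it with $\varepsilon$ smaller than the strict gap; this is harmless.

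There are a few gaps to fill carefully. First, one must ensure the spliced competitor is still an admissible curve in $W^{1,\infty}((0,1))$ joining $x$ to $y$: this requires concatenating the unchanged pieces of $u$ with the new arc and reparameterising, which preserves Lipschitz regularity since all pieces are Lipschitz and the concatenation is over finitely many (here, two) junction points; the value of $L$ is reparameterisation-invariant. Second, if $u$ meets infinitely many translates $\Omega_g + k$, I only need to modify it on one maximal subinterval where it lies in a single translate, so the combinatorial structure causes no difficulty — the existence of at least one such maximal interval with the stated endpoint behaviour follows from the open-interval decomposition together with the fact that within each component $u$ stays in one translate by continuity and the spacing of the lattice copies (the translates $\Omega_g + k$ have disjoint closures for distinct $k$, since $\Omega_g \subsetneq [0,1]^d$ so $\partial\Omega_g$ is path connected and $\Omega_w$ separates them).

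The main obstacle I anticipate is the bookkeeping at the interface: showing rigorously that the restricted energy $\int_s^t a(u)\|u'\|\,\text{d}\tau$ is bounded below by $d_{\Omega_g}(\,\cdot\,,\,\cdot\,;\beta)$ — i.e.\ that $u|_{(s,t)}$ genuinely stays inside the \emph{closed} translate and one may apply the definition of $d_{\Omega_g}$ with the correct boundary behaviour — and then that the new arc can be glued in without creating a curve that dips back into $\Omega_g + \mathbb{Z}^d$ elsewhere. Since we only alter $u$ on $[s,t]$ and the replacement arc lies in $\partial\Omega_g + k \subset \Omega_w \cup \partial(\Omega_g+k)$, the modified curve's energy is strictly smaller, which is the desired contradiction; the remaining care is purely in justifying the infimum comparisons and the admissibility of concatenations.
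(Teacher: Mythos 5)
Your proof is correct and follows essentially the same route as the paper: isolate an excursion of the geodesic into a single translate of $\Omega_g$, bound its cost from below by $d_{\Omega_g}(\,\cdot\,,\,\cdot\,;\beta)$, invoke the high opacity inequality \eqref{3:hcc}, and splice in a boundary path to contradict minimality (the paper takes an exact boundary minimiser via Hopf--Rinow where you take an $\varepsilon$-optimal competitor, an immaterial difference). One small slip to fix: the openness you need is that of $\Omega_g+\mathbb{Z}^d$ (so that $u^{-1}(\Omega_g+\mathbb{Z}^d)$ is open), not of $\Omega_w$, which is in fact closed since $\Omega_g$ is open.
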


\begin{proof}
Suppose that $\text{graph}(u) \cap \left( \Omega_g + \mathbb Z^d \right) \neq \emptyset$, then there exists $\mathbf x \in \mathbb Z^d$ such that $G(\mathbf x) := \text{graph}(u) \cap \left( \Omega_g + \mathbf x \right) \neq \emptyset$.  Set $TG(\mathbf x) := \{ \tau \in (0,1) \colon u(\tau) \in G(\mathbf x) \}$, $s = \inf TG(\mathbf x)$ and $t = \sup TG(\mathbf x)$.

As $\Omega_g$ is open, it follows that $s<t$, to see this, observe by assumption that $TG(\mathbf x) \neq \emptyset$ and therefore there exists $t' \in TG(\mathbf x)$. By definition $u(t') \in G(\mathbf x) \subset \Omega_g + \mathbf x$, hence there exists $\rho > 0$ such that $B_{\rho}(u(t')) \subset \Omega_g + \mathbf x$, since $\Omega_g + \mathbf x$ is open. By the continuity of $u$ there exists $\delta > 0$ such that $\sigma \in (t' - \delta, t' + \delta)$ implies that $u(\sigma) \in \Omega_g + \mathbf x$. By construction $ s \leq t'-\delta < t' + \delta \leq t$, hence $s<t$. By the continuity of $u$ it holds that $u(s),u(t) \in \Omega_w$.

Further, it holds that, since $u(\sigma) \in \Omega_g + \mathbf x$ for all $\sigma \in (s,t)$,
\begin{equation*}
\int_s^t a(u(\tau))\|u'(\tau)\| \, \text{d}\tau = \int_s^t \beta \|u'(\tau)\| \, \text{d}\tau \geq d_{\Omega_g}(u(s),u(t);\beta).
\end{equation*}
As $\beta > \lambda$, using the invariance of length under reparameterisations, it holds that,
\begin{equation}\label{3:strmin}
\int_s^t a(u(\tau))\|u'(\tau)\| \, \text{d}\tau = \int_s^t \beta \|u'(\tau)\| \, \text{d}\tau > d_{\partial \Omega_g}(u(s),u(t);\beta).
\end{equation}
Since $\partial \Omega_g$ is compact, it follows by the Hopf-Rinow Theorem \cite[Theorem 2.5.28]{burago01a} the infimum in the definition of $d_{\partial \Omega_g}$ is obtained by some $\tilde u \in \mathscr A(u(s),u(t))$. Define the function $v$ by
\begin{equation*}
v(\tau) := \begin{cases}
\tilde u( \tau) & \text{ if } \tau \in (s,t)\\
u(\tau) & \text{ otherwise,}
\end{cases}
\end{equation*}
and using \eqref{3:strmin} gives that $L(u) > L(v)$. Hence $u$ is not a geodesic. 
\end{proof}

\section{Statement of the mathematical problem}
Let $\Omega_g$ satisfy the hypotheses of Lemma \ref{3:purpose} and let the high opacity coefficient be denoted by $\lambda$. Define the metric coefficient $a_{p,\ep} \colon \mathbb R^d \rightarrow \mathbb{R}$ by 
\begin{equation}
a_{p,\ep}(x) := 
\begin{cases}
\beta \ep^{-p} & \text{ if } x \in \Omega_g + \mathbb Z^d,\\
1 & \text{otherwise,}
\end{cases}
\end{equation}
for $\beta > \lambda$, $p \in (0,\infty)$ and $\beta/\lambda > \ep^p > 0$. The requirement that $\beta/\lambda > \ep^p$ ensures that Lemma \ref{3:purpose} holds for all $\ep$. Extend this definition to the case when $p = \infty$ by setting,
\begin{equation}
a_{\infty,\ep}(x) := 
\begin{cases}
+ \infty & \text{ if } x \in \Omega_g + \mathbb Z^d,\\
1 & \text{otherwise,}
\end{cases}
\end{equation}
for $\ep > 0$. For the remainder of this paper we will assume $\beta > \lambda$ is fixed.
Set
\begin{equation}\label{len1}
F_{p,\ep}(u) := \int_0^1 a_{p,\ep} \left( \frac{u(\tau)}{\ep} \right)\| u'(\tau) \| \; \, \text{d}\tau, \quad u \in W^{1,\infty}\left((0,1)\right).
\end{equation}
The aim of this paper is to study two $\G$-convergence problems for $F_{p,\ep}$. In the first problem we examine the $\G$-convergence of $F_{p,\ep}$ on $W^{1,\infty}\left((0,1)\right)$. The second problem is to study the $\G$-convergence of $F_{p,\ep}$ on $\mathscr A(x,y)$. The $\G$-convergence is with respect to the strong $L^{\infty}\left((0,1)\right)$ topology. The main tool behind our arguments is to compare the $\G$-convergence of $F_{p,\ep}$ with the $\G$-convergence of the functionals
\begin{equation}\label{3:llamas}
F_{\ep}(u) := \int_0^1 a\left( \frac{u(\tau)}{\ep} \right) \|u'(\tau) \| \, \text{d}\tau,
\end{equation}
where the metric coefficient is given by 
\begin{equation}\label{metcf}
a(x) := 
\begin{cases}
\beta & \text{ if } x \in \Omega_g + \mathbb Z^d,\\
1 & \text{otherwise.}
\end{cases}
\end{equation}
The computation of the $\G$-limit in this case relies on the fact that the functionals $F_{\ep}$ are uniformly bounded in $\ep$. By \cite[Theorem 15.4]{braides98a} functionals \eqref{3:llamas} $\G$-converge to a functional of the form
\begin{equation}\label{f0}
F_0(u) := \int_0^1 \psi(u'(\tau)) \, \text{d}\tau,
\end{equation}
where the convex function $\psi$ is given by the \emph{asymptotic homogenisation formula} 
\begin{equation}\label{old:asymp}
\psi(\xi) = \lim_{\ep \rightarrow 0} \inf_{u \in \mathscr A (0,\xi)} \int_0^1a\left(\frac{u(\tau)}{\ep}\right)\|u'(\tau)\| \, \text{d}\tau.
\end{equation} 
The function $\psi$ is also 1-homogeneous as demonstrated in \cite{braides02b}. By the fundamental theorem of $\G$-convergence \cite[Theorem 7.2]{braides98a} it follows that
\begin{equation*}
\lim_{\ep \rightarrow 0} d_{\ep}(\xi_1,\xi_2) = \min_{u \in \mathscr A(\xi_1,\xi_2)} \int_0^1 \psi(u'(\tau)) \, \text{d}\tau.
\end{equation*}
Furthermore, it holds that
\begin{equation*}
 \min_{u \in \mathscr A(\xi_1,\xi_2)} \int_0^1 \psi(u'(\tau)) \, \text{d}\tau \leq  \int_0^1 \psi(\xi_2-\xi_1) \, \text{d}\tau = \psi(\xi_2-\xi_1).
\end{equation*}
Note that the minimum exists by \cite[Theorem 7.2]{braides98a}. In addition it holds that for any $u \in \mathscr A(\xi_1,\xi_2)$
\begin{equation}\label{final1}
\psi(\xi_2-\xi_1) = \psi \left(\int_0^1 u'(\tau) \, \text{d}\tau \right) \leq \int_0^1 \psi(u'(\tau)) \, \text{d}\tau
\end{equation}
by Jensen's inequality. Then taking the minimum over all $u \in \mathscr A(\xi_1,\xi_2)$ in \eqref{final1} we get
\begin{equation}\label{1:norm}
\lim_{\ep \rightarrow 0} d_{\ep}(\xi_1,\xi_2) = \psi(\xi_2-\xi_1).
\end{equation}
The distance function induced by $F_{p,\ep}$ is given by
\begin{equation*}
d_{p,\ep}(\xi_1, \xi_2) =  \min_{u \in \mathscr A (\xi_1,\xi_2)} F_{p,\ep}(u),
\end{equation*}
similarly the distance function induced by $F_{\ep}$ is given by
\begin{equation*}
d_{\ep}(\xi_1, \xi_2) =  \min_{u \in \mathscr A (\xi_1,\xi_2)}  F_{\ep}(u).
\end{equation*}
The proof that $d_{p,\ep}$ is a distance function can be found in \cite[Lemma 1.4.1]{jost05a}. The minimum exists by the Hopf-Rinow Theorem \cite[Theorem 2.5.28]{burago01a}. It also follows that
\begin{equation}\label{3:7:est}
\|\xi_1 - \xi_2\| \leq d_{p,\ep}(\xi_1, \xi_2) \leq \frac{\beta}{\ep^p}\|\xi_1 - \xi_2\|.
\end{equation}

\section{$\Gamma$-convergence of the unconstrained metrics}
In this section we address the first problem described before. That is we determine that the sequence $F_{p,\ep}$ $\G$-converges on $W^{1,\infty}\left((0,1)\right)$ with respect to the strong $L^{\infty}\left((0,1)\right)$ topology for $p \in (0,\infty]$. Let $\Omega_g$ satisfy the hypotheses of Lemma \ref{3:purpose} and let the high opacity coefficient be denoted by $\lambda$. First we need a technical lemma which states that given $\ep > 0$ and $u \in W^{1,\infty}\left((0,1)\right)$ there exists a curve no further than $\sqrt{d}\ep$ away from $\text{graph}(u)$ that does not enter the high opacity regions of the Riemannian length density, that is to say that $\text{graph}(u) \cap  \ep \left( \Omega_g + \mathbb{Z}^d \right) = \emptyset$. We remark that the results of this subsection generalise the examples of \cite{buttazzo01a}, in the sense that the assumptions on $\Omega_g$ are relaxed.

\begin{Lemma}\label{3:90}
Let $u \in W^{1,\infty}\left((0,1)\right)$, then for each $\ep > 0$ there exists $u^w_{\ep} \in W^{1,\infty}\left((0,1)\right)$ such that $\text{graph}(u) \subset \ep \Omega_w$ and $\|u - u^w_{\ep} \|_{\infty} \leq \sqrt{d} \ep$.
\end{Lemma}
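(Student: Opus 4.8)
The plan is to push $u$ onto the ``free walls'' of the rescaled lattice. First note that since $\Omega_g$ is open and $\Omega_g\subseteq[0,1]^d$, it cannot contain any point of $\partial([0,1]^d)$, so in fact $\Omega_g\subseteq(0,1)^d$. Hence each translate $\Omega_g+\mathbf z$ is contained in $\mathbf z+(0,1)^d$, and therefore the wall set $W:=\{x\in\R^d:\ x_i\in\mathbb Z\text{ for some }i\}$ satisfies $W\subseteq\Omega_w$, so $\ep W\subseteq\ep\Omega_w$. It thus suffices to construct $u^w_\ep\in W^{1,\infty}((0,1))$ with $u^w_\ep(\tau)\in\ep W$ for all $\tau$ and $\|u-u^w_\ep\|_\infty\le\sqrt d\,\ep$.

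Next I would replace $u$ by a polygonal curve in general position. Pick a piecewise linear $\tilde u$ with $\|u-\tilde u\|_\infty\le\tfrac{\sqrt d}{4}\ep$ (possible since Lipschitz curves are uniformly approximated by their linear interpolants), and perturb its vertices by an arbitrarily small amount so that no edge of $\tilde u$ is contained in a hyperplane $\{x_i\in\ep\mathbb Z\}$. Along each edge every coordinate is an affine function, so the edge meets only finitely many of the hyperplanes $\{x_i\in\ep\mathbb Z\}$ (general position forbids an edge lying in one); consequently $[0,1]$ splits into finitely many intervals $[t_{k-1},t_k]$ on each of which $\tilde u$ lies in a single closed lattice cube $Q_k=\ep(\mathbf a_k+[0,1]^d)$, with $Q_{k+1}\neq Q_k$. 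At a crossing time $t_k$ the crossing is transversal, so $Q_k$ and $Q_{k+1}$ differ only by flipping the side of some coordinates, whence $\tilde u(t_k)\in Q_k\cap Q_{k+1}$, a common proper face of both cubes, and in particular $\tilde u(t_k)\in\partial Q_k\cap\partial Q_{k+1}\subseteq\ep W$.

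On each piece I then push to the cube boundary by a radial projection. Fix $m_k$ within distance $\eta_k\le\tfrac{\sqrt d}{4}\ep$ of the centre of $Q_k$ and off the finitely many segments of $\tilde u$ inside $Q_k$ (a null set in $Q_k$ since $d\ge2$), and let $\pi_k$ be the radial projection from $m_k$ onto $\partial Q_k$, which is Lipschitz on any compact set avoiding $m_k$. Set $u^w_\ep:=\pi_k\circ\tilde u$ on $[t_{k-1},t_k]$. Then $u^w_\ep(\tau)\in\partial Q_k\subseteq\ep W\subseteq\ep\Omega_w$; since $\tilde u(t_k)$ lies on $\partial Q_k$ and on $\partial Q_{k+1}$ it is a fixed point of both $\pi_k$ and $\pi_{k+1}$, so the pieces agree at the junctions, and being continuous and piecewise Lipschitz with finitely many pieces, $u^w_\ep\in W^{1,\infty}((0,1))$. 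Finally, by convexity of $Q_k$ the point $\tilde u(\tau)$ lies on the segment $[m_k,\pi_k(\tilde u(\tau))]$, so $\|u^w_\ep(\tau)-\tilde u(\tau)\|\le\|\pi_k(\tilde u(\tau))-m_k\|\le\tfrac{\sqrt d}{2}\ep+\eta_k\le\tfrac{3\sqrt d}{4}\ep$, and hence $\|u-u^w_\ep\|_\infty\le\|u-\tilde u\|_\infty+\|\tilde u-u^w_\ep\|_\infty\le\sqrt d\,\ep$.

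The crux is the conflict between keeping $u^w_\ep$ continuous at the junctions and keeping it close to $u$: projecting each piece to its nearest wall point is discontinuous, while projecting to a fixed wall point breaks the uniform bound. Passing to a polygonal curve forces the junctions onto grid faces shared by consecutive cubes (so no connecting arcs are needed), and radial projection from near the cube centre both fixes points already on $\partial Q_k$ and moves no point by more than the centre-to-vertex distance $\tfrac{\sqrt d}{2}\ep$; together these resolve the tension. The supporting geometric observations are that $\Omega_g\subseteq(0,1)^d$ (so the walls $\ep W$ lie in $\ep\Omega_w$) and that a transversal crossing puts consecutive cubes in contact along a proper face. (Throughout $d\ge2$, as is implicit in the standing hypotheses on $\Omega_g$.)
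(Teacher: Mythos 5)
Your proof is correct, but it takes a genuinely different route from the paper's. The paper never leaves the original curve except on its excursions into the inclusions: for each connected excursion of $u$ into $\ep(\Omega_g+\mathbf x_i)$ it replaces that arc by a Lipschitz path running along $\partial(\ep(\Omega_g+\mathbf x_i))$, which requires the path-connectedness (and enough regularity to get Lipschitz parametrisations) of $\partial\Omega_g$, and keeps $u^w_\ep=u$ elsewhere, so in particular the endpoints are untouched; the $\sqrt d\,\ep$ bound comes from $\mathrm{diam}(\mathrm{cl}(\Omega_g))\le\sqrt d$. You instead observe that openness of $\Omega_g$ inside $[0,1]^d$ forces $\Omega_g\subseteq(0,1)^d$, so the grid skeleton $\ep W$ lies in $\ep\Omega_w$, and you retract a generic polygonal approximation of the whole curve onto $\ep W$ by cube-wise radial projection, gluing at the grid-crossing times. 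This buys you independence from the connectivity and regularity of $\partial\Omega_g$ (and sidesteps the paper's somewhat loosely justified claims that the excursions have finitely many components and that boundary points can be joined by Lipschitz curves in $\partial\Omega_g$), at the price of modifying $u$ everywhere — in particular the endpoint values are not preserved, which is harmless for the statement as written but is a feature the paper's construction retains — and of needing $d\ge 2$ to choose the projection centres off the curve, which, as you note, is already forced by the standing assumption that $\Omega_w$ and $\partial\Omega_g$ are path connected. One cosmetic point: after perturbing the vertices into general position you should keep a little slack in the $\tfrac{\sqrt d}{4}\ep$ approximation bound (or take $\eta_k$ small), but since your final estimate is $\tfrac{\sqrt d}{4}\ep+\tfrac{\sqrt d}{2}\ep+\eta_k$ there is room to absorb this, so the $\sqrt d\,\ep$ bound stands.
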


\begin{proof}
Fix $u \in W^{1,\infty}\left((0,1)\right)$ and $\ep > 0$. Since $\|u\|_{\infty} < \infty$, it follows that there exist $\mathbf x_1, ... , \mathbf x_n \in \mathbb Z^d$ such that $\text{graph}(u) \subset \cup_{i = 1}^n \ep ( [0,1]^d + \mathbf x_i)$. Fix $i \in \{ 1, ..., n \}$ and define $G_i := \text{graph}(u) \cap \ep \left( \Omega_g + \mathbf x_i \right)$.  Let $G_i^j$ be a connected component of $G_i$; there exists finitely many such connected components since $u$ is Lipschitz. Now fix $j$. Set $TG_i^j := \{ \tau \in (0,1) \colon  u(\tau) \in G_i^j \}$, $s_i^j = \inf TG_i^j$ and $t_i^j = \sup TG_i^j$. Choose $i$ such that $TG_i^j \ne \emptyset$; if $TG_i^j = \emptyset$ for all $i,j$ then set $u^w_{\ep} = u$ and we are done. Applying the argument of Lemma \ref{3:purpose}, $s_i^j < t_i^j$ and $u(s_i^j), u(t_i^j) \in \partial (\ep ( \Omega_g + \mathbf x_i))$. Since $\partial\Omega_g$ is path connected there exists a Lipschitz curve joining $u(s_i^j)$ to $u(t_i^j)$ in $ \partial (\ep ( \Omega_g + \mathbf x_i))$ denoted as $w_i^j$.  The fact that $w_i^j$ is Lipschitz continuous follows from the smoothness of $\partial \Omega_g$ \cite[Chapters 1 and 8]{jost05a}. Then set
\begin{equation*}
u^w_{\ep}(\tau) := \begin{cases}
w_i^j(\tau) & \text{ if } \tau \in (s_i^j,t_i^j)\\
u(\tau) & \text{ otherwise.}
\end{cases}
\end{equation*}
It is clear from the construction that $\text{graph}(u) \subset \ep \Omega_w$. Note that, since $\Omega_g$ is assumed to be open, $\Omega_w$ is closed. It remains to check that $\|u - u^w_{\ep} \|_{\infty} \leq \sqrt{d} \ep$. Fix, $\tau \in (s_i, t_i)$ for some $i \in \{ 1 , ... , n\}$, then
\begin{align*}
\|u^w_{\ep} - u\|_{\infty} & = \|w_i - u \|_{\infty} \leq \ep \text{diam}(\text{cl}(\Omega_g + \mathbf x_i)),
\end{align*}
since $u(\tau), w_i(\tau) \in \text{cl}(\ep(\Omega_g + \mathbf x_i))$ for all $\tau \in (s_i^j, t_i^j)$. It is immediate that $\text{diam}(\text{cl}(\Omega_g + \mathbf x_i)) \leq \sqrt{d}$ and therefore taking the supremum over all $\tau$ gives the required estimate. The fact that $u^w_{\ep} \in W^{1,\infty}\left((0,1)\right)$ follows from the regularity of $u$, $w_i$ and $\partial \Omega_g$.
\end{proof}

The following lemma shows that for the $\G$-convergence of $F_{\ep}$ it is possible to choose a recovery sequence that never enters the higher opacity region. 

\begin{Lemma}\label{3:tec:1}
For each $u \in W^{1,\infty}\left((0,1)\right)$ and each sequence $(\ep_k)_{k =1}^{\infty}$ converging to $0$, there exists a sequence $(u_{\ep_k})_{\ep_k > 0} \subset W^{1,\infty}\left((0,1)\right)$, converging in $L^{\infty}\left((0,1)\right)$ to $u$, such that
\begin{enumerate}
\item $\lim_{k \rightarrow \infty} F_{\ep_k}(u_{\ep_k}) = F_0(u)$, and,
\item $\text{graph}(u_{\ep_k}) \subset \ep_k \Omega_w$ for all $k$
\end{enumerate}
where $F_0$ is given by \eqref{f0} and \eqref{old:asymp}.
\end{Lemma}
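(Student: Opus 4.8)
The plan is to combine the standard existence of a recovery sequence for the $\G$-convergence $F_{\ep} \to F_0$ with the projection construction of Lemma \ref{3:90} to push that recovery sequence out of the high opacity inclusions, and then to show that this projection does not cost anything in the limit because of the high opacity coefficient and Lemma \ref{3:purpose}.

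First I would invoke \cite[Theorem 15.4]{braides98a}, or rather the definition of $\G$-convergence itself, to obtain for the given $u$ and the given sequence $(\ep_k)$ a \emph{first} recovery sequence $(v_{\ep_k}) \subset W^{1,\infty}((0,1))$ converging to $u$ in $L^{\infty}((0,1))$ with $\lim_k F_{\ep_k}(v_{\ep_k}) = F_0(u)$; without loss of generality (by the minimality of the $\G$-limit) one may also assume $v_{\ep_k} \in \mathscr A(u(0),u(1))$ is a geodesic for $d_{\ep_k}$ between its endpoints, or at least that $F_{\ep_k}(v_{\ep_k})$ is within $\ep_k$ of such a minimiser. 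Next, for each $k$ I would apply Lemma \ref{3:90} to $v_{\ep_k}$ to produce $u_{\ep_k} := (v_{\ep_k})^w_{\ep_k}$ with $\mathrm{graph}(u_{\ep_k}) \subset \ep_k \Omega_w$ and $\|u_{\ep_k} - v_{\ep_k}\|_{\infty} \leq \sqrt{d}\,\ep_k$. By the triangle inequality $\|u_{\ep_k} - u\|_{\infty} \leq \|u_{\ep_k} - v_{\ep_k}\|_{\infty} + \|v_{\ep_k} - u\|_{\infty} \to 0$, so item (2) and the convergence $u_{\ep_k} \to u$ in $L^{\infty}$ are immediate, as is item (2) by construction.

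The real content is item (1): that replacing $v_{\ep_k}$ by its projection $u_{\ep_k}$ does not increase the limiting energy. Here I would argue $\limsup_k F_{\ep_k}(u_{\ep_k}) \leq \limsup_k F_{\ep_k}(v_{\ep_k}) = F_0(u)$ by showing $F_{\ep_k}(u_{\ep_k}) \leq F_{\ep_k}(v_{\ep_k})$ for each $k$ (up to an error vanishing with $\ep_k$). On each interval $(s_i^j, t_i^j)$ where $v_{\ep_k}$ traverses an inclusion $\ep_k(\Omega_g + \mathbf{x}_i)$, the energy of $v_{\ep_k}$ is $\int_{s_i^j}^{t_i^j} \beta \|v'_{\ep_k}\|\,\d\tau \geq \ep_k\, d_{\Omega_g}(v_{\ep_k}(s_i^j)/\ep_k, v_{\ep_k}(t_i^j)/\ep_k; \beta)$ after rescaling, whereas the replacement $w_i^j$ stays on $\partial(\ep_k(\Omega_g+\mathbf{x}_i))$ and its energy is $\int_{s_i^j}^{t_i^j} 1 \cdot \|(w_i^j)'\|\,\d\tau$, which we may take arbitrarily close to $\ep_k\, d_{\partial\Omega_g}(v_{\ep_k}(s_i^j)/\ep_k, v_{\ep_k}(t_i^j)/\ep_k)$ by choosing $w_i^j$ to nearly realise the boundary geodesic. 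Since $\beta > \lambda$, the high opacity inequality \eqref{3:hcc} gives $d_{\partial\Omega_g}(\cdot,\cdot) < d_{\Omega_g}(\cdot,\cdot;\beta)$, so summing over the finitely many excursions $(i,j)$ yields $F_{\ep_k}(u_{\ep_k}) \leq F_{\ep_k}(v_{\ep_k})$ plus an error controlled by the approximation of the boundary geodesics; outside these excursion intervals $u_{\ep_k} = v_{\ep_k}$, so their energies agree there. For the matching lower bound, $\liminf_k F_{\ep_k}(u_{\ep_k}) \geq F_0(u)$ follows directly from the $\G$-liminf inequality for $F_{\ep} \to F_0$, since $u_{\ep_k} \to u$. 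Combining the two inequalities gives $\lim_k F_{\ep_k}(u_{\ep_k}) = F_0(u)$, which is item (1).

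The main obstacle I anticipate is the bookkeeping around the excursion intervals: there are finitely many $(i,j)$ for each fixed $k$ (since $v_{\ep_k}$ is Lipschitz and $\mathrm{graph}(v_{\ep_k})$ is compact), but one must ensure the intervals $(s_i^j, t_i^j)$ are disjoint (or handle overlaps), that the endpoints $v_{\ep_k}(s_i^j), v_{\ep_k}(t_i^j)$ genuinely lie on $\partial(\ep_k(\Omega_g+\mathbf{x}_i))$ so that Lemma \ref{3:purpose}'s argument applies verbatim, and that the cumulative error from approximating each boundary geodesic $w_i^j$ can be made to vanish uniformly as $k \to \infty$ — the cleanest route is to choose $w_i^j$ so that its length exceeds $\ep_k d_{\partial\Omega_g}(\cdot,\cdot)$ by at most, say, $\ep_k/(k \cdot N_k)$ where $N_k$ is the number of excursions, which is harmless since the strict inequality \eqref{3:hcc} already provides a genuine (inclusion-dependent but positive) gap. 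A secondary subtlety is justifying the reduction to geodesic recovery sequences $v_{\ep_k}$; if one prefers to avoid it, one can instead take an arbitrary recovery sequence and note that on each excursion $v_{\ep_k}$ has energy at least $\ep_k d_{\Omega_g}(\cdot,\cdot;\beta)$ regardless, which is still strictly above the $d_{\partial\Omega_g}$ cost of the replacement, so the argument goes through without the geodesic reduction.
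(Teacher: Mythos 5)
Your route is genuinely different from the paper's. The paper never touches an abstract recovery sequence for $F_{\ep}$: it builds $u_{\ep_k}$ directly from $u$ by first projecting $u$ via Lemma \ref{3:90} and then replacing $u^w_{\ep_k}$ on a partition of mesh $1/M_k$ by piecewise $d_{\ep_k}$-geodesics between the partition values, with $M_k$ chosen against the locally uniform convergence $d_{\ep_k}\to\psi$ (Buttazzo--De Pascale--Fragal\`a); Lemma \ref{3:purpose} then applies to each geodesic piece, which is how property (2) is obtained, and the energy estimate comes from $\sum_i d_{\ep_k}\bigl(u^w_{\ep_k}(\tau_{i-1}),u^w_{\ep_k}(\tau_i)\bigr)$ versus $\sum_i\psi\bigl(u(\tau_i)-u(\tau_{i-1})\bigr)$. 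Your plan instead takes an arbitrary recovery sequence $v_{\ep_k}$ and argues that the Lemma \ref{3:90} projection does not increase the energy, using the strict high opacity inequality \eqref{3:hcc}. The overall architecture (liminf from $\G$-convergence, limsup by comparison with $v_{\ep_k}$, property (2) and $L^\infty$-convergence from Lemma \ref{3:90}) is sound, and your aside that one may take $v_{\ep_k}$ to be near-geodesics is false for general $u$ (a recovery sequence for $u$ has energy tending to $F_0(u)$, not to the minimum), but you correctly note the argument does not need it.

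The genuine gap is in the central comparison $F_{\ep_k}(u_{\ep_k})\le F_{\ep_k}(v_{\ep_k})+o(1)$. On the interval $(s_i^j,t_i^j)$, defined via the infimum and supremum of the times at which $v_{\ep_k}$ meets a \emph{graph} component of $\mathrm{graph}(v_{\ep_k})\cap\ep_k(\Omega_g+\mathbf x_i)$, the curve need not remain inside the inclusion: it can exit and re-enter, so $a_{p,\ep_k}\equiv\beta$ fails on part of the interval and the lower bound $\int_{s_i^j}^{t_i^j}\beta\|v'_{\ep_k}\|\,\text{d}\tau\ge\ep_k\, d_{\Omega_g}(\cdot,\cdot;\beta)$ is not available. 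You cannot fall back on the trivial bound either, because for non-convex $\Omega_g$ the boundary distance between the excursion endpoints is not dominated by the two-phase distance between them, so the boundary-geodesic replacement can genuinely cost more than $v_{\ep_k}$ does on such an interval, and with $O(1/\ep_k)$ excursions the excess need not vanish. The repair is to decompose the open set $\{\tau\colon v_{\ep_k}(\tau)\in\ep_k(\Omega_g+\mathbb Z^d)\}$ into its (possibly countably many) interval components rather than using graph components: on each such interval the curve stays inside a single copy by definition, its endpoints lie on the boundary (except possibly at $\tau=0,1$, which contribute at most $O(\ep_k)$ and must be treated separately since one endpoint may lie strictly inside an inclusion), and then \eqref{3:hcc} with exact boundary geodesics (Hopf--Rinow) gives a per-excursion strict improvement with no error budget, after reparameterising each replacement at constant speed to keep a Lipschitz bound. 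With that modification your argument goes through; as written, the key inequality can fail.
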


\begin{proof}
Let $K \subset \subset \mathbb R^d$ be such that $\text{graph}(u) \subset \text{int}(K)$. By \cite[Theorem 3.1]{buttazzo01a} the metrics induced by $F_{\ep_k}$, denoted $d_{\ep_k}$, converge locally uniformly to the metric induced by the norm $\psi$ as $k \rightarrow \infty$. Therefore, it is possible to choose $(M_k)_{k=1}^{\infty} \subset \mathbb N$ converging to infinity such that
\begin{equation*}
\lim_{k \rightarrow \infty} M_k \sup_{\xi_1, \xi_2 \in K} \left| d_{\ep_k}(\xi_1,\xi_2) - \psi(\xi_2 - \xi_1) \right| = 0.
\end{equation*}
Let $\pi_{M_k} = \{ \tau_0 ,... ,\tau_{M_k}\}$ be a partition of $[0,1]$ such that $|\tau_j - \tau_{j+1}| = 1/M_k$ for $j = 1, ... ,M_k$. Define the function $u_{\ep_k}$ by
\begin{multline}\label{3:1234}
u_{\ep_k} = u^w_{\ep_k} +\\ \text{argmin}_{w \in W^{1,\infty}_0\left((0,1)\right)} \int_{\tau_{i-1}}^{\tau_i} a\left(\frac{u(\tau)+w(\tau)}{\ep_k}\right)\|u'(\tau) + w'(\tau) \| \, \text{d} \tau
\end{multline}
on $[{\tau_{i-1}},{\tau_i}]$, where $u^w_{\ep_k}$ is given by Lemma \ref{3:90}. The minimiser of \label{3:1234} exists by the Hopf-Rinow Theorem \cite[Theorem 2.5.28]{burago01a}. Fix $k$ and $t \in [0,1]$ and suppose that $\tau \in [\tau_{i-1},\tau_i]$. Then
\begin{align}
\left\| u_{\ep_k}(\tau) - u^w_{\ep_k}(\tau) \right\| & \leq \left\| u_{\ep_k}(\tau) - u_{\ep_k}(\tau_{i-1}) \right\| + \left\|u_{\ep_k}(\tau_{i-1}) - u^w_{\ep_k}(\tau) \right\|, \nonumber\\
& = \left\| u_{\ep_k}(\tau) - u_{\ep_k}(\tau_{i-1}) \right\| + \left\|u_{\ep_k}^w(\tau_{i-1}) - u^w_{\ep_k}(\tau) \right\|, \label{3:13:13}
\end{align}
using the fact that $u^w_{\ep_k} = u_{\ep_k}$ on $\pi_{M_k}$. By the bound on $a$ \eqref{metcf} it holds that
\begin{multline}
\left\| u_{\ep_k}(\tau) - u_{\ep_k}(\tau_{i-1}) \right\| \leq d_{\ep_k}( u_{\ep_k}(\tau),u_{\ep_k}(\tau_{i-1}) ) \\
 \leq d_{\ep_k}( u_{\ep_k}(\tau_i),u_{\ep_k}(\tau_{i-1}) )
 \leq \beta \left\| u_{\ep_k}(\tau_i) - u_{\ep_k}(\tau_{i-1}) \right\|.\label{3:12:12}
\end{multline}
Since $u^w_{\ep_k} \rightarrow u$ uniformly by Lemma \ref{3:90}, it follows that the sequence $u^w_{\ep_k}$ is equicontinuous by the converse of the Arzel\'a-Ascoli Theorem. Therefore, fix $\eta > 0$ then there exists $\delta > 0$ such that $|x-y| < \delta$ implies that $\|u^w_{\ep_k}(x)-u^w_{\ep_k}(y)\| < \eta$ for all $k$. Consequently there exists $K \in \mathbb N$ such that $k \geq K$ implies $|\pi_{M_k}| < \delta$, therefore for $k \geq K$, combining \eqref{3:13:13} and \eqref{3:12:12},
\begin{equation*}
\left\| u_{\ep_k}(\tau) - u_{\ep_k}^w(\tau) \right\| \leq \beta \eta + \eta.
\end{equation*}
Consequently, since $\eta$ was arbitrary, $\|u_{\ep_k} - u_{\ep_k}^w\|_{\infty} \rightarrow 0$ as $k \rightarrow \infty$, and since $u^w_{\ep_k}$ converges to $u$ in $L^{\infty}\left((0,1)\right)$ it holds that $u_{\ep_k} \rightarrow u$ in $L^{\infty}\left((0,1)\right)$. It remains to show that $u_{\ep_k}$ has the desired properties. Observe that
\begin{align}
\int_0^1\psi(u'(\tau)) \, \text{d}\tau & \geq \sum_{i=1}^{M_k} \psi(u(\tau_i)-u(\tau_{i-1})) \nonumber \\
& \geq \sum_{i=1}^{M_k}  d_{\ep}(u_{\ep_k}(\tau_i),u_{\ep_k}(\tau_{i-1}))  \, -\nonumber \\
& \qquad \sum_{i=1}^{M_k} \left| \psi(u(\tau_i)-u(\tau_{i-1})) - d_{\ep}(u_{\ep_k}(\tau_i),u_{\ep_k}(\tau_{i-1})) \right|.\label{3:th1111}
\end{align}
The first inequality in the above holds since the length functional with density $\psi$ gives rise to an induced metric $d(x,y) = \psi(x-y)$, the equality following from \eqref{1:norm}. By construction it is true that
\begin{equation*}
 \sum_{i=1}^{M_k}  d_{\ep}(u_{\ep_k}(\tau_i),u_{\ep_k}(\tau_{i-1})) = F_{\ep}(u_{\ep_k}),
\end{equation*}
furthermore, since there exists $k_0$ such that $\text{graph}(u_{\ep_k}) \subset K$ for all $k \geq k_0$,
\begin{multline*}
\sum_{i=1}^{M_k} \left| \psi(u(\tau_i)-u(\tau_{i-1})) - d_{\ep}(u_{\ep_k}(\tau_i),u_{\ep_k}(\tau_{i-1})) \right| \\ \leq M_k \sup_{\xi_1, \xi_2 \in K} |d_{\ep_k}(\xi_1,\xi_2) - \psi(\xi_2 - \xi_1)|.
\end{multline*}
Hence by \eqref{3:th1111}, the choice of $(M_k)_{k=1}^{\infty}$ and the liminf inequality,
\begin{equation*}
\int_0^1\psi(u'(\tau)) \, \text{d}\tau \geq \limsup_{k \rightarrow \infty} F_{\ep}(u_{\ep_k}) \geq \liminf_{k \rightarrow \infty} F_{\ep}(u_{\ep_k}) \geq \int_0^1\psi(u'(\tau)) \, \text{d}\tau.
\end{equation*}
The last thing we need to show is that $\text{graph}(u_{\ep_k}) \subset \ep_k \Omega_w$ for all $k$. By the construction, as given in \eqref{3:1234}, $u_{\ep_k}$ is constructed as a piecewise geodesic curve joining points along $u^w_{\ep_k}$. By Lemma \ref{3:purpose} these geodesic pieces do not enter $\Omega_g + \mathbb Z^d$, hence the result holds.
\end{proof}

\begin{Theorem}\label{thm:lenconv}
Let $\Omega_g$ satisfy the hypotheses of Lemma \ref{3:purpose} and let the high opacity coefficient be denoted by $\lambda$. Let $\beta > \lambda$, then
\begin{equation*}
\G(L^{\infty}\left((0,1)\right))-\lim F_{p,\ep}(u) = F_0(u), \; \forall u \in W^{1,\infty}\left((0,1)\right), \forall p \in (0,\infty]
\end{equation*}
where $F_0$ is defined by equations \eqref{f0} and \eqref{old:asymp}.
\end{Theorem}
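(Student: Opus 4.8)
The plan is to establish the two halves of the definition of $\Gamma$-convergence separately, using as black boxes the $\Gamma$-convergence $F_\ep \xrightarrow{\Gamma} F_0$ coming from \cite[Theorem 15.4]{braides98a} together with the refined recovery sequence of Lemma \ref{3:tec:1} and the perturbation estimate of Lemma \ref{3:90}.

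For the $\liminf$ inequality, the key preliminary observation I would record is the pointwise bound $a_{p,\ep} \ge a$ on $\mathbb R^d$, valid for $\ep \le 1$: indeed $\beta\ep^{-p} \ge \beta$ when $p \in (0,\infty)$ and $\ep \le 1$, while trivially $+\infty \ge \beta$ when $p = \infty$. Hence $F_{p,\ep}(v) \ge F_\ep(v)$ for every $v \in W^{1,\infty}\left((0,1)\right)$ and every sufficiently small $\ep$. Given any sequence $\ep_k \to 0$ and any $u_k \to u$ in $L^{\infty}\left((0,1)\right)$, I would then chain this with the $\Gamma$-$\liminf$ inequality for the $\Gamma$-convergent family $F_{\ep_k}$ to get $\liminf_k F_{p,\ep_k}(u_k) \ge \liminf_k F_{\ep_k}(u_k) \ge F_0(u)$.

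For the $\limsup$ (recovery) inequality, fix $u \in W^{1,\infty}\left((0,1)\right)$ and $\ep_k \to 0$, and take the sequence $(u_{\ep_k})$ furnished by Lemma \ref{3:tec:1}: it converges to $u$ in $L^{\infty}\left((0,1)\right)$, satisfies $\lim_k F_{\ep_k}(u_{\ep_k}) = F_0(u)$, and crucially has $\mathrm{graph}(u_{\ep_k}) \subset \ep_k \Omega_w$ for every $k$. Since $u_{\ep_k}(\tau)/\ep_k \in \Omega_w = \mathbb R^d \setminus (\Omega_g + \mathbb Z^d)$ for all $\tau$, the coefficients agree along the whole curve, $a_{p,\ep_k}\!\left(u_{\ep_k}(\tau)/\ep_k\right) = 1 = a\!\left(u_{\ep_k}(\tau)/\ep_k\right)$; this is exactly the point at which the case $p = \infty$ causes no trouble, because the curve never visits the region where $a_{\infty,\ep}$ is infinite. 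Therefore $F_{p,\ep_k}(u_{\ep_k}) = F_{\ep_k}(u_{\ep_k})$ for every $k$, so $\lim_k F_{p,\ep_k}(u_{\ep_k}) = F_0(u)$, which is stronger than the required $\limsup \le F_0(u)$. Combining the two inequalities yields $\G(L^{\infty}\left((0,1)\right))$-$\lim F_{p,\ep} = F_0$ for all $p \in (0,\infty]$.

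I do not expect a genuine obstacle in this argument: all the analytic work — producing a near-optimal curve that simultaneously recovers the homogenised energy \emph{and} is displaced off the inclusions at scale $O(\ep)$ without moving in $L^{\infty}$ by more than $\sqrt d\,\ep$ — has already been isolated in Lemmas \ref{3:90} and \ref{3:tec:1}. The only points deserving explicit mention are the harmless restriction to $\ep \le 1$ needed for the pointwise comparison $a_{p,\ep} \ge a$ (harmless since $\ep \to 0$), and the remark that the recovery-sequence step is genuinely insensitive to $p$, including $p = \infty$, precisely because those recovery curves avoid $\Omega_g + \mathbb Z^d$ entirely. It is worth flagging that this robustness is special to the unconstrained problem: once endpoints are fixed one loses the freedom to reroute around inclusions at negligible cost, which is the source of the dichotomy at $p = 1$ studied later.
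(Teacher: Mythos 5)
Your proposal is correct and follows essentially the same route as the paper: the comparison $a_{p,\ep}\ge a$ for small $\ep$ gives the $\liminf$ inequality via the $\Gamma$-convergence of $F_\ep$, and the recovery sequence of Lemma \ref{3:tec:1}, whose graphs avoid $\ep_k(\Omega_g+\mathbb Z^d)$, gives the $\limsup$ inequality with $F_{p,\ep_k}(u_{\ep_k})=F_{\ep_k}(u_{\ep_k})$. The only cosmetic difference is that the paper passes to a subsequence with $\ep_k^p<\beta/\lambda$ and then invokes the Urysohn property, whereas you justify the pointwise bound by $\ep\le 1$ and note the restriction is harmless since $\ep_k\to 0$ — an equally valid (indeed slightly cleaner) bookkeeping of the same argument.
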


\begin{proof}
Fix $p \in (0,\infty]$ and let $(\ep_k)_{k = 1}^{\infty}$ be a sequence converging to 0. Without relabelling, pass to the subsequence where $\ep_k^p < \beta/\lambda$ for all $k$. Let $u_{\ep_k} \rightarrow u$ in $L^{\infty}\left((0,1)\right)$. Since $a_{p,\ep_k} \geq a$, by our choice of $\ep$, it follows that
\begin{equation*}
\liminf_{k \rightarrow \infty} F_{p,\ep_k}(u_{\ep_k}) \geq \liminf_{k \rightarrow \infty} F_{\ep_k} (u_{\ep_k}) \geq F_0(u),
\end{equation*}
the second inequality being the liminf inequality for the $\G$-convergence of $F_{\ep_k}$. Applying Lemma \ref{3:tec:1} we obtain a sequence $u_{\ep_k}$ converging to $u$ in $L^{\infty}\left((0,1)\right)$ where $\lim_{k \rightarrow \infty} F_{\ep_k}(u_{\ep_k}) = F_0(u)$ and  $\text{graph}(u_{\ep_k}) \subset \ep_k \Omega_w$ for all $k$. Since $a_{p,\ep_k} = a$ on $\Omega_w$ it follows that
\begin{equation*}
\lim_{k \rightarrow \infty} F_{p,\ep_k}(u_{\ep_k}) = \lim_{k \rightarrow \infty} F_{\ep_k}(u_{\ep_k}).
\end{equation*}
Hence the sequence $F_{p,\ep_k}$ $\G$-converges by the Urysohn property of $\G$-convergence \cite[Proposition 7.11]{braides98a}.
\end{proof}

While the proof of Theorem \ref{thm:lenconv} is nontrivial, as the sequence of functionals are not bounded uniformly, the convergence of minimisers, and hence the motivation for computing the $\G$-limit, can be determined using a simpler argument. Observe, by the growth conditions that the minimiser of $F_{p,\ep}$ is the zero function, which is unique and this converges to the minimiser of $F_0$. The fact that the $0$ function is the unique minimiser of $F_0$ on $W^{1,\infty}\left((0,1)\right)$ follows from the growth estimates on $\psi$ \cite{amar98a}. It proves to be more interesting to understand the $\G$-convergence of $F_{p,\ep}$ on a smaller space, typically the space of Lipschitz curves joining two fixed points. The result of such an analysis may then be applied to problems in nonlinear optics and dynamics. In this setting, the minimal curves are nontrivial, and to understand their effective behaviour proves to be more challenging in the context of the unbounded length functionals we consider here. This is the subject of the next section.

\section{$\Gamma$-convergence for the boundary value problem}
As before, let $\Omega_g$ satisfy the hypotheses of Lemma \ref{3:purpose} and let the high opacity coefficient be denoted by $\lambda$. In the previous subsection it was shown that the unbounded length functionals $F_{p,\ep}$ $\G$-converge on $W^{1,\infty}\left((0,1)\right)$, for all $p \in (0,\infty]$. From this no additional information may be derived to approximate geodesics joining points. Typically, once the $\G$-limit is calculated for the unconstrained problem, the $\G$-limit with boundary conditions can be calculated as an extension of the original argument \cite[Proposition 11.7]{braides98a}. The extension argument relies on what is known as the \emph{fundamental estimate}, which allows one to join the functions in the argument of the functional, introducing only a small error term.

In this subsection we study the effective description of geodesics joining two points for the length functional \eqref{len1}. The main, and indeed surprising, result here is that the existence of the $\G$-limit for $F_{p,\ep}$ depends on the value of $p$. To prove this fact we show that the sequence $\G$-converges on the space of curves joining any two points if and only if the induced metric converges locally uniformly; this result is that of \cite{buttazzo01a} but in the context of non-uniformly bounded two phase metrics. Here we show that the induced metrics always converge locally uniformly, should the opacity coefficient grow sufficiently slowly as $\ep \rightarrow 0$.

\subsection{The induced metrics converge locally uniformly to a norm for $p < 1$}

In this subsection we consider the case of slowly increasing opacity with $\ep$. Here we prove that the induced metrics $d_{p,\ep}$ converge locally uniformly to a norm. The following technical lemma allows one to find points in $\ep\Omega_w$ that are within a uniform distance of any point in $\mathbb{R}^d$.

\begin{Lemma}\label{3:lem:exseq}
For all $\xi \in \mathbb R^d, \ep > 0$ there exists $\xi_{\ep} \in \ep \Omega_w$ be such that $\|\xi_{\ep} - \xi \| \leq \sqrt{d}\ep$.
\end{Lemma}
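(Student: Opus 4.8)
The claim is essentially a statement about lattice geometry: the set $\Omega_w = \mathbb{R}^d \setminus (\Omega_g + \mathbb{Z}^d)$ is $\mathbb{Z}^d$-periodic and nonempty (since $\Omega_g \subsetneq [0,1]^d$), so every point of $\mathbb{R}^d$ lies within a bounded distance of $\Omega_w$, and rescaling by $\ep$ turns that bounded distance into $O(\ep)$. The plan is to make this quantitative with the explicit constant $\sqrt{d}$. First I would fix $\xi \in \mathbb{R}^d$ and $\ep > 0$ and pass to the rescaled picture: it suffices to find a point $\zeta \in \Omega_w$ with $\|\zeta - \xi/\ep\| \le \sqrt{d}$, since then $\xi_\ep := \ep\zeta \in \ep\Omega_w$ satisfies $\|\xi_\ep - \xi\| = \ep\|\zeta - \xi/\ep\| \le \sqrt{d}\,\ep$. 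So write $\eta := \xi/\ep$ and let $\mathbf{x} \in \mathbb{Z}^d$ be the (a) lattice point with $\eta \in \mathbf{x} + [0,1]^d$, i.e.\ $\eta$ lies in the closed unit cell based at $\mathbf{x}$.

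Next I would use the fact that $\Omega_w$ is closed (since $\Omega_g$ is open) and $\mathbb{Z}^d$-periodic, and that $\Omega_g + \mathbb{Z}^d$ does not cover $[0,1]^d$ — indeed $\partial\Omega_g \subset \Omega_w$ is nonempty, and more simply any point of $[0,1]^d \setminus \Omega_g$ lies in $\Omega_w$ up to a lattice translate, because $\Omega_g \subsetneq [0,1]^d$ guarantees $[0,1]^d \setminus (\Omega_g + \mathbb{Z}^d)$ is nonempty (one must check that a point of $[0,1]^d$ not in $\overline{\Omega_g}$ cannot be killed by a nonzero lattice translate of $\Omega_g$, which holds since $\Omega_g \subset [0,1]^d$ forces translates $\Omega_g + \mathbf{z}$ with $\mathbf{z}\neq 0$ to meet $[0,1]^d$ only in its boundary, a null set — alternatively invoke that $\overline{\Omega_w}\cap[0,1]^d\neq\emptyset$ directly from the standing hypothesis that $\partial\Omega_g$ is path connected and nonempty). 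Pick any $\zeta_0 \in \Omega_w \cap [0,1]^d$; then $\zeta := \zeta_0 + \mathbf{x} \in \Omega_w$ by periodicity, and both $\eta$ and $\zeta$ lie in the cube $\mathbf{x} + [0,1]^d$, whose diameter is $\sqrt{d}$. Hence $\|\zeta - \eta\| \le \sqrt{d}$, which is exactly what was needed.

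The only genuine subtlety — and the step I would be most careful about — is the assertion that $\Omega_w \cap [0,1]^d \neq \emptyset$, equivalently that the periodized inclusion $\Omega_g + \mathbb{Z}^d$ does not swallow the whole unit cube. This is where the hypothesis $\Omega_g \subsetneq [0,1]^d$ (strict, with Lipschitz boundary) is used: since $\Omega_g$ is a proper open subset of the cube, $[0,1]^d \setminus \overline{\Omega_g}$ could a priori still be covered by translates $\Omega_g + \mathbf{z}$, $\mathbf{z} \in \mathbb{Z}^d \setminus \{0\}$, but each such translate satisfies $(\Omega_g + \mathbf{z}) \cap [0,1]^d \subset (\Omega_g + \mathbf{z}) \cap ([0,1]^d + \mathbf{z}) + \big([0,1]^d \setminus ([0,1]^d+\mathbf{z})\big)$; more cleanly, $\Omega_g + \mathbf z \subset [0,1]^d + \mathbf z$, and $([0,1]^d+\mathbf z)\cap[0,1]^d$ has empty interior for $\mathbf z\neq 0$, so the union $\bigcup_{\mathbf z\neq 0}(\Omega_g+\mathbf z)\cap[0,1]^d$ is contained in the boundary $\partial[0,1]^d$ of the cube, a set of measure zero. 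Since $\Omega_g$ itself is a proper subset, $[0,1]^d\setminus\Omega_g$ has nonempty interior, so there is a point of $[0,1]^d$ avoiding $\Omega_g$ and all its nonzero translates; that point lies in $\Omega_w$. With this fixed, the rest is the elementary diameter estimate above, and the constant $\sqrt{d} = \operatorname{diam}([0,1]^d)$ is optimal for this argument.
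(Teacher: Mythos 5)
Your argument is correct, and it supplies exactly the content that the paper dismisses with the one-word proof ``Trivial'': rescale by $\ep$, locate a point of $\Omega_w$ in the unit cell containing $\xi/\ep$, and use $\operatorname{diam}([0,1]^d)=\sqrt{d}$. Two remarks. First, there is a cleaner way to settle the one point you label as the genuine subtlety: since $\Omega_g$ is open and $\Omega_g\subset[0,1]^d$, it is in fact contained in the open cube $(0,1)^d$, so every translate $\Omega_g+\mathbf z$ with $\mathbf z\neq 0$ is disjoint from the closed cube $[0,1]^d$, and in particular every lattice point lies in $\Omega_w$; one may then simply take $\xi_\ep:=\ep\left(\lfloor\xi_1/\ep\rfloor,\dots,\lfloor\xi_d/\ep\rfloor\right)\in\ep\mathbb Z^d\subset\ep\Omega_w$, which is within $\sqrt{d}\,\ep$ of $\xi$. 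This is presumably the intended ``trivial'' argument, and it is what the paper implicitly uses later when it writes $\xi_2\in\tilde\ep_k\mathbb Z^d\subset\tilde\ep_k\Omega_w$ in Theorem \ref{3:t:65}. Second, one auxiliary assertion in your proposal is false as stated: ``since $\Omega_g$ is a proper subset, $[0,1]^d\setminus\Omega_g$ has nonempty interior'' fails, e.g.\ for $\Omega_g=(0,1)^d$ (admissible for $d\geq 2$), whose complement in the cube is $\partial[0,1]^d$. This does not damage your proof, because the nonzero translates do not meet $[0,1]^d$ at all (not merely in a null set), and your fallback observation $\emptyset\neq\partial\Omega_g\subset\Omega_w$ is valid; but the measure-theoretic detour and the interior claim should be dropped in favour of the disjointness of $(0,1)^d+\mathbf z$, $\mathbf z\neq 0$, from $[0,1]^d$.
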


\begin{proof}
Trivial. 
\end{proof}

In the case when $p < 1$ the following lemma describes the pointwise convergence of $d_{p,\ep}$ in terms of the convergence of $d_{p,\ep}$ on $\ep\Omega_w$. The latter convergence is easier to prove as there is more information available about geodesics joining points in $\ep \Omega_w$, c.f. Lemma \ref{3:purpose}.

\begin{Lemma}\label{3:lem:coex}
Let $p < 1$, $\xi_1,\xi_2 \in \mathbb R^d$, $\ep >0$ and $\xi_{1,\ep},\xi_{2,\ep}  \in \ep \Omega_w$ be such that $\|\xi_{1,\ep} - \xi_1 \| \leq \sqrt{d}\ep$, $\|\xi_{2,\ep} - \xi_2 \| \leq \sqrt{d}\ep$. The limit 
\begin{equation}\label{3:AA}
\lim_{\ep \rightarrow 0} d_{p,\ep}(\xi_1, \xi_2) = \lim_{\ep \rightarrow 0} \min_{u \in \mathcal A(\xi_1, \xi_2)} F_{p,\ep}(u) 
\end{equation}
exists if and only if the limit 
\begin{equation}\label{3:BB}
\lim_{\ep \rightarrow 0} d_{p,\ep}(\xi_{1,\ep}, \xi_{2,\ep}) = \lim_{\ep \rightarrow 0} \min_{u \in \mathcal A(\xi_{1,\ep},\xi_{2,\ep})} F_{p,\ep}(u) 
\end{equation}
exists. 
\end{Lemma}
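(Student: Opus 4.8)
The plan is to prove the two limits exist simultaneously by showing that $d_{p,\ep}(\xi_1,\xi_2)$ and $d_{p,\ep}(\xi_{1,\ep},\xi_{2,\ep})$ differ by a quantity that tends to $0$ as $\ep \to 0$. Once $|d_{p,\ep}(\xi_1,\xi_2) - d_{p,\ep}(\xi_{1,\ep},\xi_{2,\ep})| \to 0$, the existence of either limit forces the existence of the other (and they coincide), which is exactly the biconditional claimed.

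First I would estimate $d_{p,\ep}(\xi_1,\xi_2)$ from above by concatenating three curves: a straight segment from $\xi_1$ to $\xi_{1,\ep}$, an optimal curve from $\xi_{1,\ep}$ to $\xi_{2,\ep}$ realising $d_{p,\ep}(\xi_{1,\ep},\xi_{2,\ep})$, and a straight segment from $\xi_{2,\ep}$ to $\xi_2$. By the triangle inequality for the metric $d_{p,\ep}$ and the upper bound in \eqref{3:7:est}, the two short segments each cost at most $\frac{\beta}{\ep^p}\sqrt{d}\,\ep = \beta\sqrt{d}\,\ep^{1-p}$. This is precisely where $p < 1$ enters: the exponent $1-p$ is positive, so $\beta\sqrt{d}\,\ep^{1-p} \to 0$. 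Thus $d_{p,\ep}(\xi_1,\xi_2) \le d_{p,\ep}(\xi_{1,\ep},\xi_{2,\ep}) + 2\beta\sqrt{d}\,\ep^{1-p}$. The reverse estimate is entirely symmetric: one goes from $\xi_{1,\ep}$ to $\xi_1$, follows an optimal curve realising $d_{p,\ep}(\xi_1,\xi_2)$, and then goes from $\xi_2$ to $\xi_{2,\ep}$, again picking up only $2\beta\sqrt{d}\,\ep^{1-p}$. Combining gives $|d_{p,\ep}(\xi_1,\xi_2) - d_{p,\ep}(\xi_{1,\ep},\xi_{2,\ep})| \le 2\beta\sqrt{d}\,\ep^{1-p}$.

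With this uniform-in-nothing-but-$\ep$ bound in hand, the biconditional is immediate: if \eqref{3:BB} exists with value $\ell$, then for any sequence $\ep_k \to 0$ we have $d_{p,\ep_k}(\xi_1,\xi_2) \to \ell$ as well, since the difference is squeezed to $0$; hence \eqref{3:AA} exists and equals $\ell$. The converse direction is the same argument with the roles reversed. I would also note in passing that the minima in \eqref{3:AA} and \eqref{3:BB} are attained by the Hopf--Rinow theorem, as already recorded after \eqref{3:7:est}, so there is no issue replacing ``$\inf$'' by ``$\min$'' when selecting the optimal middle curves for the concatenation.

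I do not anticipate a serious obstacle here; the lemma is genuinely a soft consequence of \eqref{3:7:est} together with the subadditivity of $d_{p,\ep}$. The only point requiring minor care is the bookkeeping of reparameterisation — the three concatenated pieces must be reparameterised to a single curve on $[0,1]$ — but since the length functional $F_{p,\ep}$ is reparameterisation invariant (as used already in Lemma \ref{3:purpose}), this is harmless. If one wanted to be slightly more careful, one could phrase everything directly in terms of $d_{p,\ep}$ as a genuine metric (a fact cited from \cite[Lemma 1.4.1]{jost05a}) and invoke only its triangle inequality, avoiding explicit curve surgery altogether; that is the cleaner route and the one I would write up.
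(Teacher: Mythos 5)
Your proposal is correct and follows essentially the same route as the paper: both use the triangle inequality for $d_{p,\ep}$ together with the bound \eqref{3:7:est} to obtain $\left|d_{p,\ep}(\xi_1,\xi_2) - d_{p,\ep}(\xi_{1,\ep},\xi_{2,\ep})\right| \leq 2\beta\sqrt{d}\,\ep^{1-p}$, and then conclude the biconditional since $1-p>0$. The cleaner metric-only write-up you suggest at the end is exactly what the paper does.
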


\begin{proof}
Fix $\ep^p \in (0,\beta/\lambda)$. Then, using the triangle inequality and \eqref{3:7:est},
\begin{align*}
 d_{p,\ep}(\xi_1, \xi_2) &\leq d_{p,\ep}(\xi_{1}, \xi_{1,\ep}) + d_{p,\ep}(\xi_{1,\ep}, \xi_{2,\ep}) + d_{p,\ep}(\xi_{2,\ep}, \xi_2)\\
& \leq \frac{\beta}{\ep^p}\|\xi_{1} - \xi_{1,\ep}\| + d_{p,\ep}(\xi_{1,\ep}, \xi_{2,\ep}) +  \frac{\beta}{\ep^p}\|\xi_{2,\ep} - \xi_2\| \\
& \leq d_{p,\ep}(\xi_{1,\ep}, \xi_{2,\ep}) +  2\beta \sqrt{d} \ep^{1-p}.
\end{align*}
Furthermore,
\begin{align*}
 d_{p,\ep}(\xi_1, \xi_2) &\geq d_{p,\ep}(\xi_{1,\ep}, \xi_{2,\ep}) - d_{p,\ep}(\xi_{1,\ep}, \xi_1) - d_{p,\ep}(\xi_{2}, \xi_{2,\ep})\\
& \geq  d_{p,\ep}(\xi_{1,\ep}, \xi_{2,\ep}) -  \frac{\beta}{\ep^p}\|\xi_{1,\ep} - \xi_1\|  -  \frac{\beta}{\ep^p}\|\xi_{2} - \xi_{2,\ep}\|\\
& \geq  d_{p,\ep}(\xi_{1,\ep}, \xi_{2,\ep}) -  2\beta \sqrt{d} \ep^{1-p}.
\end{align*}
Therefore
\begin{equation}\label{3:CC}
\left|d_{p,\ep}(\xi_1, \xi_2) - d_{p,\ep}(\xi_{1,\ep}, \xi_{2,\ep}) \right| \leq 2\beta \sqrt{d} \ep^{1-p}.
\end{equation}
If the limit \eqref{3:BB} exists then taking the limit as $\ep \rightarrow 0$ gives that, by \eqref{3:CC}, the limit \eqref{3:AA} exists. The converse statement is proved in an identical fashion. 
\end{proof}

The next lemma provides an 'unfolding' mechanism for a metric which replaces the minimisation of an $\ep$-dependant length functional with the minimisation of a single scale metric joining $\ep$-dependant end points. This observation ensures that Lemma \ref{3:purpose} can be applied to the unfolded metric. 

\begin{Lemma}\label{3:lem:cov}
Let $p < 1$, $\xi_1,\xi_2 \in \mathbb R^d$, $\ep >0$ and $\xi_{1,\ep},\xi_{2,\ep}  \in \ep \Omega_w$ be such that $\|\xi_{1,\ep} - \xi_1 \| \leq \sqrt{d}\ep$, $\|\xi_{2,\ep} - \xi_2 \| \leq \sqrt{d}\ep$. Then
\begin{equation*}
\lim_{\ep \rightarrow 0} \min_{u \in \mathscr A(\xi_{1,\ep},\xi_{2,\ep})} F_{p,\ep}(u) = \\
\lim_{\ep \rightarrow 0} \min_{u \in \mathscr A(\xi_{1,\ep}/\ep,\xi_{2,\ep}/\ep)} \ep \int_0^{1} a_{p,\ep} (u(\tau))\|u'(\tau)\| \, \text{d}\tau ,
\end{equation*}
provided either limit exists.
\end{Lemma}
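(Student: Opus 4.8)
The plan is to establish this identity by a direct change of variables $v(\tau) := u(\tau)/\ep$ in the length functional, which turns the oscillatory coefficient $a_{p,\ep}(u(\tau)/\ep)$ into $a_{p,\ep}(v(\tau))$ at the expense of a factor of $\ep$ in front of the integral and a rescaling of the boundary data from $\xi_{i,\ep}$ to $\xi_{i,\ep}/\ep$. First I would recall that the functional appearing on the left, $F_{p,\ep}(u) = \int_0^1 a_{p,\ep}(u(\tau)/\ep)\|u'(\tau)\| \d\tau$, has an integrand that is invariant under reparameterisation, so the minimisation over $\mathscr A(\xi_{1,\ep},\xi_{2,\ep})$ is genuinely a minimisation of a geometric length. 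Given any admissible $u \in \mathscr A(\xi_{1,\ep},\xi_{2,\ep})$, set $v := u/\ep$; then $v \in \mathscr A(\xi_{1,\ep}/\ep,\xi_{2,\ep}/\ep)$, $v' = u'/\ep$, so $\|u'\| = \ep\|v'\|$, and $a_{p,\ep}(u/\ep) = a_{p,\ep}(v)$. Substituting gives $F_{p,\ep}(u) = \ep\int_0^1 a_{p,\ep}(v(\tau))\|v'(\tau)\| \d\tau$. This correspondence $u \mapsto u/\ep$ is a bijection between $\mathscr A(\xi_{1,\ep},\xi_{2,\ep})$ and $\mathscr A(\xi_{1,\ep}/\ep,\xi_{2,\ep}/\ep)$ (with inverse $v \mapsto \ep v$, and both maps preserve $W^{1,\infty}$ regularity), so the two minimisation problems have exactly the same value for each fixed $\ep$:
\begin{equation*}
\min_{u \in \mathscr A(\xi_{1,\ep},\xi_{2,\ep})} F_{p,\ep}(u) = \min_{v \in \mathscr A(\xi_{1,\ep}/\ep,\xi_{2,\ep}/\ep)} \ep\int_0^1 a_{p,\ep}(v(\tau))\|v'(\tau)\| \d\tau.
\end{equation*}
The existence of the minima on both sides is guaranteed by the Hopf--Rinow theorem, as already invoked in the paper.

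Once the equality holds for every fixed $\ep \in (0,(\beta/\lambda)^{1/p})$, the statement about limits is immediate: the two $\ep$-indexed sequences of real numbers are literally equal term by term, so one converges if and only if the other does, and then their limits agree. I would phrase the conclusion exactly this way to match the "provided either limit exists" hypothesis.

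I do not expect a genuine obstacle here; the content is a scaling identity. The only points that need a word of care are: (i) confirming that the rescaling map preserves the admissible class and its Lipschitz regularity, which is elementary since multiplication by the constant $\ep$ (or $1/\ep$) is bi-Lipschitz on $\mathbb{R}^d$; (ii) noting that the substitution is valid even in the degenerate case $p=\infty$ where $a_{\infty,\ep}$ takes the value $+\infty$ — but this lemma is stated only for $p<1$, so $a_{p,\ep}$ is finite-valued and no subtlety arises; and (iii) making explicit that this identity is what lets one subsequently apply Lemma \ref{3:purpose} to the "unfolded" single-scale metric, since the rescaled endpoints $\xi_{i,\ep}/\ep$ lie in $\Omega_w$ by the hypothesis $\xi_{i,\ep}\in\ep\Omega_w$. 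That observation is really the motivation for the lemma rather than part of its proof, but it is worth stating so the reader sees why the reformulation is useful.
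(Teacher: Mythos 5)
Your proposal is correct and follows essentially the same argument as the paper: the substitution $w = u/\ep$ gives $F_{p,\ep}(u) = \ep\int_0^1 a_{p,\ep}(w)\|w'\|\,\text{d}\tau$, the correspondence is a bijection between the two admissible classes, so the minima coincide for each fixed $\ep$ and the limits agree whenever either exists.
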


\begin{proof}
Fix $\ep > 0$ and $u \in \mathscr A(\xi_{1,\ep},\xi_{2,\ep})$, then
\begin{align*}
F_{p,\ep}(u) &= \int_0^1 a_{p,\ep}\left( \frac{u(\tau)}{\ep} \right) \|u'(\tau)\| \, \text{d}\tau = \ep \int_0^1 a_{p,\ep}\left( w(\tau) \right) \|w'(\tau)\| \, \text{d}\tau,
\end{align*}
where $w = u/\ep$. Clearly $w \in \mathscr A(\xi_{1,\ep}/\ep,\xi_{2,\ep}/\ep)$. Since the correspondence between $u$ and $w$ is one-to-one the result follows by taking the minimum over $u$, or equivalently $w$, and then passing to the limit.
\end{proof}

The following lemma connects the homogenisation of $F_{p,\ep}$ with that of $F_{\ep}$. The proof demonstrates that the functionals $F_{\ep}$ and $F_{p,\ep}$ give equivalent measures of length for geodesics. For notational convenience we define for $x,y \in \mathbb{R}^d$ the space $\mathscr A_w(x,y) := \{ u \in W^{1,\infty}\left((0,1)\right) : u(0) = x, u(1) = y, u(\tau) \in \Omega_w \, \forall \tau \}$.

\begin{Lemma}\label{3:le1}
Let $p < 1$, $\xi_1,\xi_2 \in \mathbb R^d$, $\ep >0$ and $\xi_{1,\ep},\xi_{2,\ep}  \in \ep \Omega_w$ be such that $\|\xi_{1,\ep} - \xi_1 \| \leq \sqrt{d}\ep$, $\|\xi_{2,\ep} - \xi_2 \| \leq \sqrt{d}\ep$. For $\beta/\lambda > \ep > 0$ it follows that
\begin{multline*}
\lim_{\ep \rightarrow 0} \min_{u \in \mathscr A(\xi_{1,\ep}/\ep,\xi_{2,\ep}/\ep)} \ep \int_0^{1} a_{p,\ep} (u(\tau))\|u'(\tau)\| \, \text{d}\tau = \\
\lim_{\ep \rightarrow 0} \min_{u \in \mathscr A(\xi_{1,\ep}/\ep,\xi_{2,\ep}/\ep)} \ep \int_0^{1} a (u(\tau))\|u'(\tau)\| \, \text{d}\tau .
\end{multline*}
\end{Lemma}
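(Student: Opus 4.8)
The plan is to show that for every admissible $\ep$ the two minimisation problems appearing under the limit are in fact \emph{identical}, so that the statement (including the fact that one limit exists iff the other does) is immediate.

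First I would fix $\ep>0$ small enough that $\ep^p<\beta/\lambda$, equivalently $\beta\ep^{-p}>\lambda$; since we ultimately let $\ep\to0$ this costs nothing. Because $\xi_{1,\ep},\xi_{2,\ep}\in\ep\Omega_w$, the rescaled endpoints $\xi_{1,\ep}/\ep$ and $\xi_{2,\ep}/\ep$ lie in $\Omega_w$. By the Hopf--Rinow Theorem both minima
\[
m_{p,\ep}:=\min_{u\in\mathscr A(\xi_{1,\ep}/\ep,\xi_{2,\ep}/\ep)}\int_0^1 a_{p,\ep}(u(\tau))\|u'(\tau)\|\,\text{d}\tau,
\qquad
m_{\ep}:=\min_{u\in\mathscr A(\xi_{1,\ep}/\ep,\xi_{2,\ep}/\ep)}\int_0^1 a(u(\tau))\|u'(\tau)\|\,\text{d}\tau
\]
are attained.

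Next I would invoke Lemma \ref{3:purpose} twice. Its hypotheses hold for the coefficient $a$ since $\beta>\lambda$, and they hold for the constant coefficient $a_{p,\ep}$ on $\Omega_g+\mathbb Z^d$ since $\beta\ep^{-p}>\lambda$ — this is precisely the reason for the constraint $\ep^p<\beta/\lambda$. Hence any length-minimising curve joining the two points of $\Omega_w$, for either functional, stays in $\Omega_w$, i.e. lies in $\mathscr A_w(\xi_{1,\ep}/\ep,\xi_{2,\ep}/\ep)$. Therefore
\[
m_{p,\ep}=\min_{u\in\mathscr A_w(\xi_{1,\ep}/\ep,\xi_{2,\ep}/\ep)}\int_0^1 a_{p,\ep}(u(\tau))\|u'(\tau)\|\,\text{d}\tau,
\qquad
m_{\ep}=\min_{u\in\mathscr A_w(\xi_{1,\ep}/\ep,\xi_{2,\ep}/\ep)}\int_0^1 a(u(\tau))\|u'(\tau)\|\,\text{d}\tau,
\]
and since $a_{p,\ep}=a=1$ on $\Omega_w$ the two right-hand sides coincide, so $m_{p,\ep}=m_{\ep}$ for every such $\ep$. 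Multiplying by $\ep$ and passing to the limit gives the asserted equality, and in particular one side converges precisely when the other does.

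The only points requiring care are that Lemma \ref{3:purpose} is a statement about geodesics, so one must first secure existence of minimisers (Hopf--Rinow) before applying it, and that the high-opacity threshold must be verified for the \emph{rescaled} value $\beta\ep^{-p}$, not merely for $\beta$. It is worth remarking that the hypothesis $p<1$ is not actually used in this lemma — the two metrics agree exactly for each fixed $\ep$ — it is needed only in the neighbouring Lemmas \ref{3:lem:coex} and \ref{3:lem:cov}, where an $\ep^{1-p}$ error term must tend to zero.
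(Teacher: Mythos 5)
Your proposal is correct and follows essentially the same route as the paper: both restrict the two minimisation problems to curves staying in $\Omega_w$ via Lemma \ref{3:purpose} (the condition $\ep^p<\beta/\lambda$ guaranteeing the high-opacity threshold for the coefficient $\beta\ep^{-p}$), observe that $a_{p,\ep}=a$ on $\Omega_w$ so the minima coincide for each fixed $\ep$, and then pass to the limit. Your added remarks — securing existence of minimisers before invoking Lemma \ref{3:purpose} and noting that the hypothesis $p<1$ is not actually used here — are accurate refinements of the paper's argument.
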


\begin{proof}
Fix  $\ep^p \in (0,\beta/\lambda)$, then
\begin{multline}\label{3:65:1}
\min_{u \in \mathscr A(\xi_{1,\ep}/\ep,\xi_{2,\ep}/\ep)} \ep \int_0^{1} a_{p,\ep} (u(\tau))\|u'(\tau)\| \, \text{d}\tau  = \\
\min_{u \in\mathscr A_w(\xi_{1,\ep}/\ep,\xi_{2,\ep}/\ep)} \ep \int_0^{1} a_{p,\ep} (u(\tau))\|u'(\tau)\| \, \text{d}\tau
\end{multline}
by Lemma \ref{3:purpose} since $\xi_{1,\ep}/\ep,\xi_{2,\ep}/\ep \in \Omega_w$. As $a_{p,\ep} = a$ on $\Omega_w$
\begin{multline}\label{3:65:2}
\min_{u \in \mathscr A_w(\xi_{1,\ep}/\ep,\xi_{2,\ep}/\ep)} \ep \int_0^{1} a_{p,\ep} (u(\tau))\|u'(\tau)\| \, \text{d}\tau  = \\
\min_{u \in\mathscr A_w(\xi_{1,\ep}/\ep,\xi_{2,\ep}/\ep)} \ep \int_0^{1} a (u(\tau))\|u'(\tau)\| \, \text{d}\tau.
\end{multline}
Another application of Lemma \ref{3:purpose} gives that
\begin{multline}\label{3:65:3}
\min_{u \in \mathscr A_w(\xi_{1,\ep}/\ep,\xi_{2,\ep}/\ep)} \ep \int_0^{1} a (u(\tau))\|u'(\tau)\| \, \text{d}\tau  = \\
\min_{u \in\mathscr A(\xi_{1,\ep}/\ep,\xi_{2,\ep}/\ep)} \ep \int_0^{1} a (u(\tau))\|u'(\tau)\| \, \text{d}\tau.
\end{multline}
Equating \eqref{3:65:1}, \eqref{3:65:2} and \eqref{3:65:3} and taking the limit as $\ep \rightarrow 0$ gives the result. 
\end{proof}

The following lemma is a statement of the analogues of Lemmas \ref{3:lem:coex} and \ref{3:lem:cov} for the sequence $F_{\ep}$.

\begin{Lemma}\label{3:lem:1s}
Let $p < 1$, $\xi_1,\xi_2 \in \mathbb R^d$, $\ep >0$ and $\xi_{1,\ep},\xi_{2,\ep}  \in \ep \Omega_w$ be such that $\|\xi_{1,\ep} - \xi_1 \| \leq \sqrt{d}\ep$, $\|\xi_{2,\ep} - \xi_2 \| \leq \sqrt{d}\ep$. Then
\begin{equation*}
\lim_{\ep \rightarrow 0} \min_{u \in \mathscr A(\xi_{1,\ep},\xi_{2,\ep})} F_{\ep}(u) =
\lim_{\ep \rightarrow 0} \min_{u \in \mathscr A(\xi_{1,\ep}/\ep,\xi_{2,\ep}/\ep)} \ep \int_0^1 a (u(\tau))\|u'(\tau)\| \, \text{d}\tau .
\end{equation*}
Furthermore, the limit 
\begin{equation}\label{3:AAa}
 \lim_{\ep \rightarrow 0} \min_{u \in \mathcal A(\xi_1, \xi_2)} F_{\ep}(u)
\end{equation}
exists, if and only if, the limit 
\begin{equation}\label{3:BBa}
 \lim_{\ep \rightarrow 0} \min_{u \in \mathcal A(\xi_{1,\ep}, \xi_{2,\ep})} F_{\ep}(u)
\end{equation}
exists. 
\end{Lemma}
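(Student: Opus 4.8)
The plan is to prove Lemma \ref{3:lem:1s} by recycling verbatim, with $a_{p,\ep}$ replaced by $a$ (equivalently $p=0$), the arguments already used for Lemmas \ref{3:lem:coex}, \ref{3:lem:cov} and \ref{3:le1}. Concretely, the lemma asserts two things: first, a rescaling/unfolding identity relating the minimum of $F_\ep$ over $\mathscr A(\xi_{1,\ep},\xi_{2,\ep})$ to the minimum of $\ep\int_0^1 a(u)\|u'\|\d\tau$ over $\mathscr A(\xi_{1,\ep}/\ep,\xi_{2,\ep}/\ep)$; second, the equivalence of existence of the two limits \eqref{3:AAa} and \eqref{3:BBa}. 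The first step is immediate: for $u\in\mathscr A(\xi_{1,\ep},\xi_{2,\ep})$ put $w=u/\ep$, so that $F_\ep(u)=\int_0^1 a(u/\ep)\|u'\|\d\tau=\ep\int_0^1 a(w)\|w'\|\d\tau$ with $w\in\mathscr A(\xi_{1,\ep}/\ep,\xi_{2,\ep}/\ep)$, and the correspondence $u\leftrightarrow w$ is a bijection between the two admissible classes; taking the minimum over each side and then $\ep\to 0$ (if either limit exists) gives the claimed identity. This is exactly the content of Lemma \ref{3:lem:cov} with $a_{p,\ep}$ replaced by the fixed coefficient $a$, and requires no new idea.

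For the second claim I would mimic Lemma \ref{3:lem:coex}. The key point is the uniform growth bound for $F_\ep$: since $1\le a\le\beta$, the induced distance $d_\ep$ satisfies $\|\xi_1-\xi_2\|\le d_\ep(\xi_1,\xi_2)\le\beta\|\xi_1-\xi_2\|$, the analogue of \eqref{3:7:est} but now with $\ep$-independent constants. Using the triangle inequality for $d_\ep$ exactly as in the proof of Lemma \ref{3:lem:coex},
\begin{equation*}
\left|d_\ep(\xi_1,\xi_2)-d_\ep(\xi_{1,\ep},\xi_{2,\ep})\right|
\le d_\ep(\xi_1,\xi_{1,\ep})+d_\ep(\xi_2,\xi_{2,\ep})
\le \beta\|\xi_1-\xi_{1,\ep}\|+\beta\|\xi_2-\xi_{2,\ep}\|
\le 2\beta\sqrt d\,\ep,
\end{equation*}
where the last step uses the hypothesis $\|\xi_{i,\ep}-\xi_i\|\le\sqrt d\,\ep$. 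Since the right-hand side tends to $0$ as $\ep\to 0$, the limits \eqref{3:AAa} and \eqref{3:BBa} exist simultaneously and are equal whenever one of them does. Note this is in fact slightly cleaner than the $p<1$ case because the bound $\beta$ does not blow up, so no restriction of the form $\ep^p<\beta/\lambda$ is needed here.

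I do not anticipate any genuine obstacle: every ingredient (the bijective rescaling, the uniform two-sided bound on $d_\ep$, the triangle inequality, and the smallness of $2\beta\sqrt d\,\ep$) has already appeared in the preceding lemmas, and the only thing to check is that the substitution $a_{p,\ep}\rightsquigarrow a$ is harmless — which it is, since $a$ is precisely $a_{p,\ep}$ with the divergent constant $\beta\ep^{-p}$ replaced by the fixed constant $\beta$. The mild bookkeeping point worth stating explicitly in the write-up is that both $\xi_{1,\ep}/\ep$ and $\xi_{2,\ep}/\ep$ lie in $\Omega_w$ (by the choice $\xi_{i,\ep}\in\ep\Omega_w$ from Lemma \ref{3:lem:exseq}), so that Lemma \ref{3:purpose} could be invoked if one wanted the $\mathscr A_w$-restricted reformulation; but for the statement as written only the unfolding identity and the uniform bound are actually used. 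So the proof is essentially ``repeat the proofs of Lemmas \ref{3:lem:cov} and \ref{3:lem:coex} with $a_{p,\ep}$ replaced by $a$,'' and I would write it out in that compressed form.
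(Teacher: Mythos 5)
Your proposal is correct and matches the paper's proof, which simply states that the first part follows exactly as in Lemma \ref{3:lem:cov} and the second by the identical argument as in Lemma \ref{3:lem:coex}; you have merely written out that same rescaling bijection and triangle-inequality estimate (with the $\ep$-independent bound $\|\xi_1-\xi_2\|\le d_\ep(\xi_1,\xi_2)\le\beta\|\xi_1-\xi_2\|$) explicitly. No gaps; your observation that the uniform bound makes the estimate even cleaner than in the $p<1$ case is accurate.
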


\begin{proof}
The first part follows exactly as in Lemma \ref{3:lem:cov}. The second part is demonstrated using an identical proof to that for Lemma \ref{3:lem:coex}.
\end{proof}

The following theorem combines Lemmas \ref{3:lem:coex}, \ref{3:lem:cov} and \ref{3:lem:1s} to establish the pointwise convergence of metrics for $p < 1$.

\begin{Theorem}\label{3:lem:exi}
Let $p <1$. Then the limit 
\begin{equation}\label{3:AABB}
\lim_{\ep \rightarrow 0} d_{p,\ep}(\xi_1,\xi_2) = \lim_{\ep \rightarrow 0} \min_{u \in \mathcal A(\xi_1, \xi_2)} F_{p,\ep}(u) = \psi(\xi_2 - \xi_1)
\end{equation}
exists for all $\xi_1,\xi_2 \in \mathbb R^d$, where $\psi$ is as in \eqref{old:asymp}. Furthermore,
\begin{equation}
\lim_{\ep \rightarrow 0} d_{p,\ep}(\xi_1,\xi_2) = \lim_{\ep \rightarrow 0} \min_{u \in \mathcal A(\xi_1, \xi_2)} F_{\ep}(u).
\end{equation}
\end{Theorem}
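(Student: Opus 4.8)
The plan is to assemble Theorem~\ref{3:lem:exi} by chaining the four preceding lemmas in sequence, so that the $\ep$-dependent metric $d_{p,\ep}(\xi_1,\xi_2)$ is first replaced by the metric between $\ep$-perturbed endpoints in $\ep\Omega_w$, then unfolded to a single-scale problem, then its coefficient $a_{p,\ep}$ is swapped for the bounded coefficient $a$, and finally the whole argument is run backwards through the $F_\ep$ analogues to land at the homogenisation of $F_\ep$, whose limit is already known to be $\psi(\xi_2-\xi_1)$ by \eqref{1:norm}.

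First I would fix $\xi_1,\xi_2 \in \mathbb{R}^d$ and invoke Lemma~\ref{3:lem:exseq} to produce sequences $\xi_{1,\ep},\xi_{2,\ep} \in \ep\Omega_w$ with $\|\xi_{i,\ep}-\xi_i\| \leq \sqrt{d}\ep$. Since $p<1$, the starting point is that the limit $\lim_{\ep\to 0} \min_{u \in \mathscr A(\xi_{1,\ep},\xi_{2,\ep})} F_\ep(u)$ exists and equals $\psi(\xi_2-\xi_1)$: this follows because $\lim_{\ep\to 0} \min_{u \in \mathscr A(\xi_1,\xi_2)} F_\ep(u) = \psi(\xi_2-\xi_1)$ by \eqref{1:norm}, and Lemma~\ref{3:lem:1s} (second part) transfers existence of this limit to the perturbed-endpoint version. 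Then Lemma~\ref{3:lem:1s} (first part) rewrites it as $\lim_{\ep\to 0} \min_{u \in \mathscr A(\xi_{1,\ep}/\ep,\xi_{2,\ep}/\ep)} \ep\int_0^1 a(u)\|u'\|\,\text{d}\tau$. Lemma~\ref{3:le1} identifies this with $\lim_{\ep\to 0} \min_{u \in \mathscr A(\xi_{1,\ep}/\ep,\xi_{2,\ep}/\ep)} \ep\int_0^1 a_{p,\ep}(u)\|u'\|\,\text{d}\tau$; Lemma~\ref{3:lem:cov} folds this back to $\lim_{\ep\to 0} \min_{u \in \mathscr A(\xi_{1,\ep},\xi_{2,\ep})} F_{p,\ep}(u)$; and finally Lemma~\ref{3:lem:coex} transfers existence and value of this limit back to $\lim_{\ep\to 0} d_{p,\ep}(\xi_1,\xi_2)$. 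Reading the chain from both ends gives $\lim_{\ep\to 0} d_{p,\ep}(\xi_1,\xi_2) = \psi(\xi_2-\xi_1) = \lim_{\ep\to 0} \min_{u \in \mathscr A(\xi_1,\xi_2)} F_\ep(u)$, which is exactly the two assertions of the theorem.

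One bookkeeping subtlety I would be careful about: Lemmas~\ref{3:lem:coex}, \ref{3:lem:cov}, \ref{3:le1}, \ref{3:lem:1s} are all phrased as ``limit A exists iff limit B exists,'' so to conclude equality of the two limits one must check that each lemma's proof actually gives $|{\text{A}_\ep} - {\text{B}_\ep}| \to 0$ (as in \eqref{3:CC}) or an exact identity (as in Lemmas~\ref{3:lem:cov}, \ref{3:le1} for fixed $\ep$), not merely the logical equivalence of convergence. This is true in each case by inspection of those proofs, so the common value propagates along the chain and no limit is lost. The argument should also note that one may freely pass to the subsequence with $\ep^p < \beta/\lambda$ so that Lemma~\ref{3:purpose} (used inside Lemma~\ref{3:le1}) applies.

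The main obstacle is not any single hard estimate — it is making sure the chain of equivalences is logically airtight and that the value $\psi(\xi_2-\xi_1)$ genuinely survives every transfer step; the only place where the hypothesis $p<1$ is essential is the bound $2\beta\sqrt{d}\,\ep^{1-p}\to 0$ in \eqref{3:CC} (and its $F_\ep$ analogue via Lemma~\ref{3:lem:1s}), which is precisely what fails for $p \geq 1$ and foreshadows the non-existence result of the next subsection. Everything else is a direct concatenation of already-proved statements.
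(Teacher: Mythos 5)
Your argument is correct and is essentially the paper's own proof: the paper likewise anchors the chain at the known limit of $\min_{u\in\mathscr A(\xi_1,\xi_2)}F_\ep(u)$ (cited from Amar--Vitali rather than \eqref{1:norm}, the same fact) and then concludes by sequentially applying Lemmas \ref{3:lem:1s}, \ref{3:le1}, \ref{3:lem:coex} and \ref{3:lem:cov}. Your extra remark that the ``exists iff exists'' lemmas in fact transfer the common value (via \eqref{3:CC} and the exact identities) is a point the paper leaves implicit, but it is the same route.
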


\begin{proof}
By \cite[Proposition 3.2]{amar98a} the limit 
\begin{equation*}
\lim_{\ep \rightarrow 0} \min_{u \in \mathcal A(\xi_1, \xi_2)} F_{\ep}(u)
\end{equation*}
exists. Applying Lemma \ref{3:lem:exseq}, for each $\ep > 0$, we obtain the existence of $\xi_{1,\ep},\xi_{2,\ep}  \in \ep \Omega_w$ be such that $\|\xi_{1,\ep} - \xi_1 \| \leq \sqrt{d}\ep$, $\|\xi_{2,\ep} - \xi_2 \| \leq \sqrt{d}\ep$. The result then follows by sequentially applying Lemmas \ref{3:lem:1s}, \ref{3:le1}, \ref{3:lem:coex} and \ref{3:lem:cov}. 
\end{proof}

\subsection{The induced metrics fail to converge pointwise for $p \geq 1$}

In this section we show that for $p \geq 1$ the limit
\begin{equation*}
\lim_{\ep \rightarrow 0} d_{p,\ep}(\xi_1,\xi_2) = \lim_{\ep \rightarrow 0} \min_{u \in \mathcal A(\xi_1, \xi_2)} F_{p,\ep}(u)
\end{equation*}
does not exist, in contrast to the case $p < 1$ it does. Consequently, the sequence $F_{p,\ep}$ fails to $\G$-converge on $\mathcal A(\xi_1,\xi_2)$ for all $\xi_1,\xi_2 \in \mathbb R^d$; since otherwise by the fundamental theorem of $\G$-convergence \cite{braides02a,braides98a} the minimum values would converge. 

The proof relies on a simple geometric assumption. The principle behind the argument is that we may choose, for a specific pair of end points, two different sequences of values for $\ep$ that subsequently give rise to two different limit values. Namely, we may choose a sequence where the endpoints lie in $\ep\Omega_w$ and therefore using Lemma \ref{3:purpose} and subsequent results to find a finite limit. However it is also possible to choose a sequence of values for $\ep$ where one of the end points lies in $\Omega_g + \mathbb{R}^d$, resulting in the divergence of the length functionals in $\ep$.

A sketch of the argument can be found in figure 1. The argument relies on joining $\xi_2 := (1/2,1/2,\cdots) \in \Omega_g$ and $\xi_1 := (0,0,\cdots) \in \Omega_w$ for specific choices of sequences of values for $\ep$. In particular we choose a sequence $\ep_k$ such that $\xi_2 \in \ep_k \left(\Omega_g + \mathbb{Z}^d\right)$ for all $k$. It follows that $d_{p,\ep_k}(\xi_1,\xi_2) = \psi(\xi_2-\xi_1) + O(\ep_k^{1-p})$ as $k \rightarrow \infty$. We also construct a sequence such that $\xi_2 \in \ep_k \Omega_w$ for all $k$. In this case, as the geodesic never enters $\ep_k \left(\Omega_g + \mathbb{Z}^d\right)$, it follows that $d_{p,\ep_k}(\xi_1,\xi_2) = \psi(\xi_2-\xi_1)$ as $k \rightarrow \infty$. Consequently we get different limit values for two sequences of values for $\ep$. Therefore the $\G$-limit ceases to exist.

\begin{figure}[htbp]
\begin{center}
\includegraphics[scale=0.55]{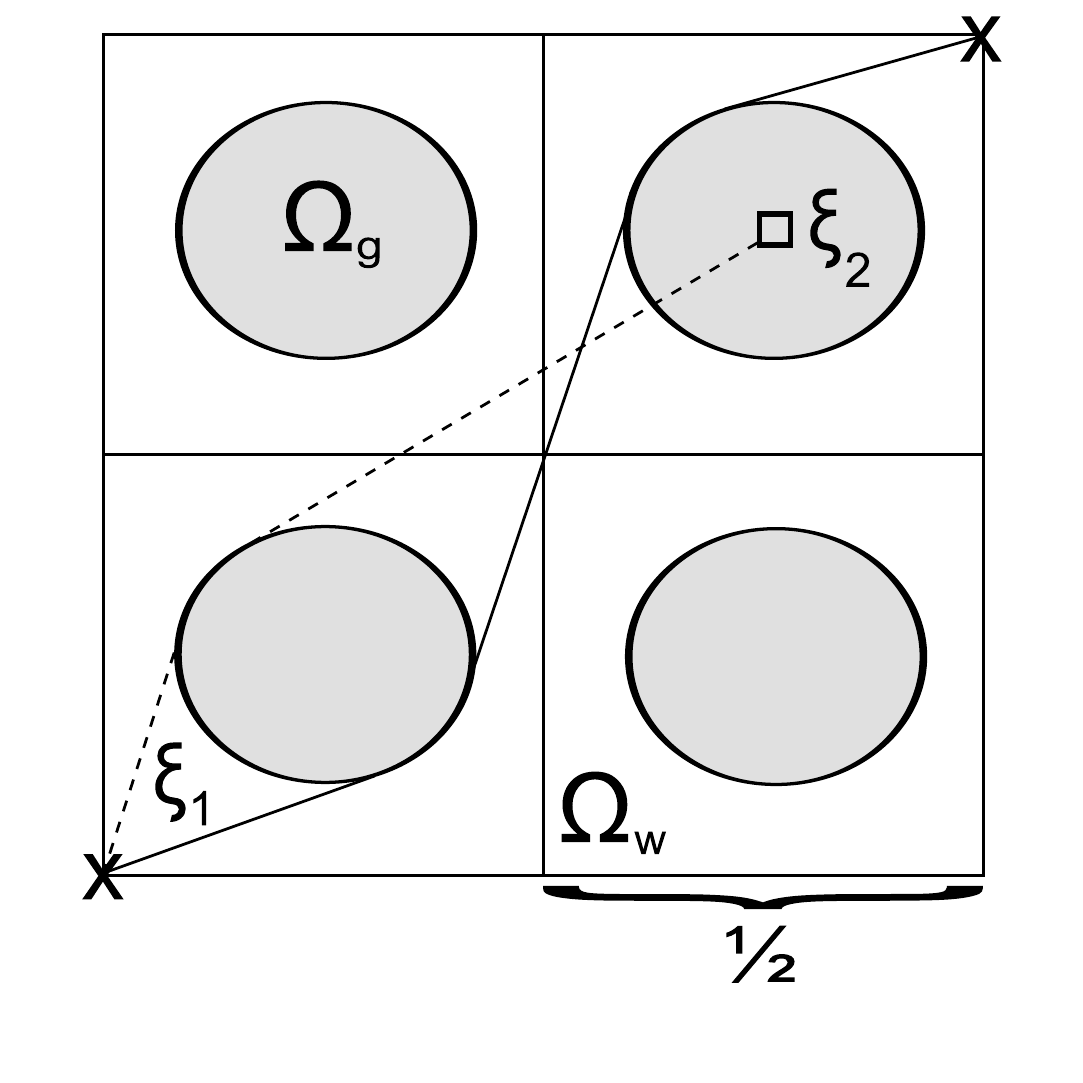}
\caption{An illustration of the proof of Theorem \ref{3:t:65}. The sequence $\tilde \ep_k$ corresponds to a sequence such that the end points of the curve lie in $\ep\Omega_w$, indicated by the solid curve. The sequence $\ep_k$ corresponds to a sequence such that it has an end point in $\ep\left(\Omega_g + \mathbb{Z}^d\right)$, indicated by the dashed curve.}
\label{fig:1}
\end{center}
\end{figure}

\begin{Theorem}\label{3:t:65}
Suppose that $\xi_2 := (1/2,1/2,\cdots) \in \Omega_g$ and that $\xi_1 := (0,0,\cdots) \in \Omega_w$. Then the limit
\begin{equation*}
\lim_{\ep \rightarrow 0} d_{p,\ep}(\xi_1,\xi_2) = \lim_{\ep \rightarrow 0} \min_{u \in \mathscr A(\xi_1,\xi_2)} F_{p,\ep}(u)
\end{equation*}
does not exist.
\end{Theorem}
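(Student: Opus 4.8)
The plan is to exhibit two sequences $\ep_k \to 0$ and $\tilde\ep_k \to 0$ along which $d_{p,\ep}(\xi_1,\xi_2)$ converges to two different limits; this is exactly what is ruled out if the limit exists, and by the fundamental theorem of $\G$-convergence it rules out $\G$-convergence of $F_{p,\ep}$ on $\mathscr A(\xi_1,\xi_2)$.

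First I would construct the sequence $\tilde\ep_k$ along which $\xi_2 = (1/2,1/2,\dots) \in \tilde\ep_k\Omega_w$. Since $\Omega_w$ is open and $\xi_1 = 0 \in \Omega_w$, and $\Omega_w + \mathbb Z^d = \Omega_w$, a simple scaling/translation argument (in the spirit of Lemma \ref{3:lem:exseq}) produces a sequence $\tilde\ep_k \to 0$ with both $\xi_1/\tilde\ep_k$ and $\xi_2/\tilde\ep_k$ lying in $\Omega_w$; concretely one can arrange $1/(2\tilde\ep_k) \in \mathbb Z^d + (\text{fixed point of }\Omega_w)$. For such $\tilde\ep_k$, I can take $\xi_{1,\tilde\ep_k} = \xi_1$ and $\xi_{2,\tilde\ep_k} = \xi_2$ in Lemmas \ref{3:lem:cov} and \ref{3:le1}, so that $d_{p,\tilde\ep_k}(\xi_1,\xi_2) = \min_{u \in \mathscr A(\xi_1/\tilde\ep_k,\xi_2/\tilde\ep_k)}\tilde\ep_k\int_0^1 a(u)\|u'\|\d\tau$ exactly (no $O(\ep^{1-p})$ error since the endpoints already lie in $\tilde\ep_k\Omega_w$). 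Running the argument of Theorem \ref{3:lem:exi} along this subsequence — which only used that $p<1$ to control the $\ep^{1-p}$ correction terms that are now absent — gives $d_{p,\tilde\ep_k}(\xi_1,\xi_2) \to \psi(\xi_2-\xi_1)$.

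Next I would construct $\ep_k$ with $\xi_2 \in \ep_k(\Omega_g + \mathbb Z^d)$: since $\xi_2 = (1/2,\dots) \in \Omega_g$, choosing $\ep_k = 1/(2m_k)$ for integers $m_k \to \infty$ makes $\xi_2/\ep_k = (m_k,\dots) \in \mathbb Z^d \subset \Omega_g + \mathbb Z^d$, i.e. $\xi_2 \in \ep_k(\Omega_g + \mathbb Z^d)$. I then estimate $d_{p,\ep_k}(\xi_1,\xi_2)$ from below. Any competitor $u \in \mathscr A(\xi_1,\xi_2)$ must, by continuity, cross $\partial(\ep_k(\Omega_g + \mathbf x))$ for the lattice cell $\mathbf x$ containing $\xi_2$; let $s$ be the last time it enters that cell. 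On $(s,1)$ the curve runs inside $\ep_k(\Omega_g + \mathbf x)$ from a boundary point to the interior point $\xi_2$, contributing at least $\beta\ep_k^{-p}\cdot(\text{dist of }\xi_2\text{ to }\partial(\ep_k(\Omega_g+\mathbf x)))$. That distance is bounded below by $c\,\ep_k$ for a fixed $c>0$ (the rescaled distance of $(1/2,\dots)$ to $\partial\Omega_g$, which is positive since $\xi_2 \in \Omega_g$ open), so this piece contributes at least $c\beta\ep_k^{1-p}$. Hence $d_{p,\ep_k}(\xi_1,\xi_2) \ge \|\xi_1-\xi_2\| + c\beta\ep_k^{1-p} \to \infty$ when $p>1$, and stays bounded away from (in fact exceeds) $\psi(\xi_2-\xi_1)$ by a fixed amount when $p=1$. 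Either way $\lim_k d_{p,\ep_k}(\xi_1,\xi_2) \ne \psi(\xi_2-\xi_1) = \lim_k d_{p,\tilde\ep_k}(\xi_1,\xi_2)$, so the full limit as $\ep\to 0$ cannot exist.

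The main obstacle is the lower bound along $\ep_k$: one must be careful that the curve may enter and leave the critical cell many times, and that the relevant "depth" of $\xi_2$ inside $\ep_k(\Omega_g+\mathbf x)$ is genuinely of order $\ep_k$ and not smaller. This is handled by noting $\xi_2 = (1/2,\dots)$ is a fixed interior point of $\Omega_g$ after rescaling by $\ep_k$ (the cell is $\ep_k([0,1]^d + \mathbf x)$ and $\xi_2/\ep_k$ lands at a lattice point, i.e. a vertex configuration that is the image of $0 \in$ the fundamental domain; more simply one fixes the geometric constant $c := \mathrm{dist}((1/2,\dots),\partial\Omega_g) > 0$ once and for all) so the last-entry-time argument of Lemma \ref{3:purpose} applies verbatim with this uniform $c$. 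A secondary point to check is that the case $p=1$ genuinely separates the two limits: here both are finite, so I need the strict gap $c\beta > 0$, which is exactly why the hypothesis $\beta > \lambda$ and the high-opacity structure are not needed for this half — only the positivity of $c$ matters.
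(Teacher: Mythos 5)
Your overall strategy is the paper's: produce one sequence of $\ep$'s along which both endpoints lie in $\ep\Omega_w$ (limit $\psi(\xi_2-\xi_1)$, and your observation that the $p<1$ hypothesis enters only through $\ep^{1-p}$ endpoint corrections which are absent here is correct, indeed slightly more careful than the paper's direct citation of Theorem \ref{3:lem:exi}), and a second sequence along which $\xi_2$ sits at fixed rescaled depth inside $\ep(\Omega_g+\mathbb Z^d)$, forcing an extra cost of order $\beta\ep^{1-p}$. However, two steps fail as written. First, your ``bad'' sequence is wrong: with $\ep_k=1/(2m_k)$ you get $\xi_2/\ep_k=(m_k,\dots,m_k)\in\mathbb Z^d$, and lattice points lie in $\Omega_w$, not in $\Omega_g+\mathbb Z^d$; the inclusion $\mathbb Z^d\subset\Omega_g+\mathbb Z^d$ you invoke would mean $0\in\Omega_g+\mathbb Z^d$, contradicting the standing hypothesis $\xi_1=0\in\Omega_w$ (note $\Omega_w+\mathbb Z^d=\Omega_w$). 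With your choice both endpoints lie in $\ep_k\Omega_w$, so by the first half of your own argument this subsequence again gives $\psi(\xi_2-\xi_1)$ and no contradiction. The paper takes $\ep_k=1/(2k+1)$, so that $\xi_2/\ep_k=(k+\tfrac12,\dots)=(\tfrac12,\dots)+(k,\dots)\in\Omega_g+\mathbb Z^d$; the fix is easy, but your later remark that ``$\xi_2/\ep_k$ lands at a lattice point'' shows the error propagates into your depth constant, which must be measured from $(\tfrac12,\dots)$, not from a vertex of the cell.

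Second, for $p=1$ your lower bound $\|\xi_1-\xi_2\|+c\beta\ep_k^{1-p}$ does not separate the two limits: you control the part of the curve outside the deep region only by its Euclidean length, so in the limit you get $\|\xi_1-\xi_2\|+c\beta$, while the other subsequence converges to $\psi(\xi_2-\xi_1)$, which satisfies merely $\psi(\xi_2-\xi_1)\ge\|\xi_2-\xi_1\|$ and can well exceed $\|\xi_2-\xi_1\|+c\beta$ when detours around $\Omega_g+\mathbb Z^d$ are long and $c\beta$ is small; your claim that the bound ``in fact exceeds $\psi(\xi_2-\xi_1)$ by a fixed amount'' is unjustified. (For $p>1$ your bound diverges, so that case is fine.) The paper closes exactly this gap: it splits the minimiser $w_{\ep_k}$ at the last time $\tau_k$ it lies in $\ep_k\Omega_w$ and at a time $\sigma_k$ on $\partial\bigl(\ep_k(B_\rho(\xi_2)+\mathbf x)\bigr)$, discards the middle piece, and bounds $F_{p,\ep_k}(w_{\ep_k})\ge d_{p,\ep_k}(\xi_1,w_{\ep_k}(\tau_k))+d_{p,\ep_k}(w_{\ep_k}(\sigma_k),\xi_2)$. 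Since $w_{\ep_k}(\tau_k)\in\ep_k\Omega_w$ with $\|w_{\ep_k}(\tau_k)-\xi_2\|\le\sqrt d\,\ep_k$, the first term converges to $\psi(\xi_2-\xi_1)$ by the endpoint machinery (Lemmas \ref{3:lem:cov}, \ref{3:le1}, \ref{3:lem:1s}), and the second equals $\beta\rho\,\ep_k^{1-p}$, yielding the strict gap $\psi(\xi_2-\xi_1)+\beta\rho$ at $p=1$. Relatedly, your aside that $\beta>\lambda$ is not needed for this half is tied to the flawed bound: the paper's $p=1$ argument uses Lemma \ref{3:purpose} for the structure of the geodesic and the endpoint convergence results, both of which rely on the high opacity coefficient.
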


\begin{proof}
Set 
\begin{equation*}
\tilde \ep_k := \frac{1}{2k}, \quad k \in \mathbb N.
\end{equation*}
If $p < \infty$ we pass to a subsequence such that $\tilde \ep_k^p < \beta/\lambda$. It follows immediately that $\xi_2 \in \tilde \ep_k \mathbb{Z}^d \subset \tilde \ep_k \Omega_w$ for all $k$, since $\xi_1 \in \Omega_w$. Consequently, by Lemma \ref{3:purpose} the solution to the problem \begin{equation*}
\min_{u \in \mathscr A(\xi_1,\xi_2)} F_{p,\tilde \ep_k}(u),
\end{equation*} 
which we denote by $w_{\tilde \ep_k}$, is such that $\text{graph}(w_{\tilde \ep_k}) \subset \tilde \ep_k \Omega_w$. Applying Theorem \ref{3:lem:exi}, with $\xi_{1,\tilde \ep_k} = \xi_1$ and $\xi_{2,\tilde \ep_k} = \xi_2$ for all $\tilde \ep > 0$, it holds that
\begin{equation*}
 \lim_{k \rightarrow \infty} \min_{u \in \mathscr A(\xi_1,\xi_2)}  F_{p,\tilde \ep_k}(u) = \lim_{k \rightarrow \infty} \min_{u \in \mathscr A(\xi_1,\xi_2)} F_{\tilde \ep_k}(u) =  \psi(\xi_2-\xi_1)
\end{equation*} 
We now construct a second sequence converging to a different limit. Set 
\begin{equation}
\ep_k = \frac{1}{2k+1}, \quad k \in \mathbb N.
\end{equation}
If $p < \infty$ we again pass to a subsequence such that $\ep_k^p < \beta/\lambda$. It follows that $\xi_2 \in \ep_k (\Omega_g + \mathbb Z^d)$ for all $k$. Denote the solution to the problem 
\begin{equation*}
\min_{u \in \mathscr A(\xi_1,\xi_2)} F_{p,\ep_k}(u),
\end{equation*} 
by $w_{\ep_k}$. It follows by Lemma \ref{3:purpose} that for each $k$ there exists $\tau_k \in (0,1)$ such that $w_{\ep_k}(\tau) \in \ep_k \Omega_w$ for $\tau \in [0,\tau_k]$ and $w_{\ep_k}(\tau) \in \ep_k(\Omega_g + (k,k,\cdots))$ for $\tau \in (\tau_k,1]$ and $(k,k,\cdots) \in \mathbb{Z}^d$. Since $\Omega_g$ is open there exists a $\rho >0 $ such that $B_{\rho}(\xi_2) \subset \Omega_g$. Hence, for all $k$, $\ep_k (B_{\rho}(\xi_2) + (k,k,\cdots)) \subset \ep_k(\Omega_g + (k,k,\cdots))$. By continuity for each $k$ there exists $\sigma_k \in (\tau_k,1)$ such that $w_{\ep_k}(\sigma_k) \in \partial \left( \ep_k (B_{\rho}(\xi_2) + (k,k,\cdots)) \right)$. Therefore
\begin{align}
F_{p,\ep_k}(w_{\ep_k}) & = \int_0^{\tau_k} a_{p,\ep}\left( \frac{w_{\ep_k}(\tau)}{\ep_k} \right) \| w'_{\ep_k}(\tau) \| \, \text{d}\tau \nonumber \\& \qquad + \int_{\tau_k}^1 a_{p,\ep}\left( \frac{w_{\ep_k}(\tau)}{\ep_k} \right) \| w'_{\ep_k}(\tau) \| \, \text{d}\tau \nonumber\\
& \geq  \int_0^{\tau_k} a_{p,\ep}\left( \frac{w_{\ep_k}(\tau)}{\ep_k} \right) \| w'_{\ep_k}(\tau) \| \, \text{d}\tau \nonumber \\& \qquad  + \int_{\sigma_k}^1 a_{p,\ep}\left( \frac{w_{\ep_k}(\tau)}{\ep_k} \right) \| w'_{\ep_k}(\tau) \| \, \text{d}\tau \nonumber \\
&= d_{p,\ep}(\xi_1, w_{\ep_k}(\tau_k)) + d_{p,\ep}(w_{\ep_k}(\sigma_k), \xi_2), \label{res1}
\end{align}
since any geodesic curve is locally geodesic. By construction $\|w_{\ep_k}(\tau_k) - \xi_2\| \leq \sqrt{d}\ep_k$ and $w_{\ep_k}(\tau_k) \in \ep_k \Omega_w$. Set $\xi_{1,\ep_k} = \xi_1$ and $\xi_{2,\ep_k} = w_{\ep_k}(\tau_k)$ for all $k$. Applying Theorem \ref{3:lem:exi} it follows that
\begin{equation}\label{res2}
\lim_{k \rightarrow \infty} d_{p,\ep}(\xi_1, w_{\ep_k}(\tau_k))  = \lim_{k \rightarrow \infty} d_{\ep_k}(\xi_1, \xi_2)= M.
\end{equation}
The quantity $d_{p,\ep}(w_{\ep_k}(\sigma_k), \xi_2)$ is the distance between the centre of the ball $\ep_k (B_{\rho}(\xi_2) + (k,k,\cdots))$ and a point on its boundary. As the ball $\ep_k (B_{\rho}(\xi_2) + (k,k,\cdots))$ is contained in $\Omega_g$ it follows that
\begin{equation}
d_{p,\ep}(w_{\ep_k}(\sigma_k), \xi_2) = \frac{\beta}{\ep_k^p}\|w_{\ep_k}(\sigma_k) - \xi_2\| 
 = \beta \ep_k^{1-p} \rho. \label{res3}
\end{equation}
Combining \eqref{res1}, \eqref{res2} and \eqref{res3} then taking the limit $k \rightarrow \infty$ gives that
\begin{equation*}
\lim_{k \rightarrow \infty} F_{p,\ep_k}(w_{\ep_k}) \geq
\begin{cases}
\infty & \text{ if } p > 1,\\
M + \beta \rho & \text{ if } p = 1.
\end{cases}
\end{equation*}
Hence there exists sequences $(\tilde \ep_k)_{k=1}^{\infty}$ and $(\ep_k)_{k=1}^{\infty}$ converging to $0$ such that $M \geq M + \beta\rho$. Since $\beta, \rho > 0$ the result is shown.
\end{proof}

The geometrical assumption is not particularly restrictive. The assumption in Theorem \ref{3:t:65} would apply to the case where the inclusion is contained in the interior of the unit cell and the inclusion is at the centre of the unit cell. 

\subsection{The equivalence of $\G$-convergence and metric convergence for $p < 1$}

In this section we show that the boundary value problem $\G$-converges if and only if the induced metrics converge locally uniformly. This extends the theory of \cite{buttazzo01a} to a class of non-uniformly bounded two-phase Riemannian length functionals. That is, when the sequence $d_{\ep}$ fails to satisfy $\alpha \|\xi_2 - \xi_1\| \leq d_{\ep}(\xi_1,\xi_2) \leq \beta \|\xi_2 - \xi_1\|$ uniformly in $\ep$ for all $\xi_1,\xi_2 \in \mathbb{R}^d$.

The following lemma shows that we can improve the bounds on the induced metric so that $d_{p,\ep}$ is almost uniformly equivalent to the Euclidean metric.

\begin{Lemma}\label{3:bds:123}
Let $p < 1$, $\ep > 0$ and $\xi_1,\xi_2 \in \mathbb R^d$. Then there exists $C_1,C_2 > 0$ such that
\begin{equation*}
\|\xi_1 - \xi_2\| - C_1\ep \leq d_{p,\ep}(\xi_1, \xi_2) \leq \beta \|\xi_1 - \xi_2\| + C_2\ep^{1-p}.
\end{equation*}
\end{Lemma}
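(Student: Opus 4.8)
The plan is to prove the two inequalities separately. The upper bound is essentially already available: combining Lemma~\ref{3:lem:exseq} with Lemma~\ref{3:purpose}, one picks points $\xi_{1,\ep},\xi_{2,\ep} \in \ep\Omega_w$ within $\sqrt{d}\ep$ of $\xi_1,\xi_2$, uses the triangle inequality together with the crude bound \eqref{3:7:est} to pay a cost of at most $2\beta\sqrt{d}\,\ep^{1-p}$ to travel from the $\xi_i$ to the $\xi_{i,\ep}$, and then estimates $d_{p,\ep}(\xi_{1,\ep},\xi_{2,\ep})$ by a competitor that stays in $\ep\Omega_w$. On $\ep\Omega_w$ the coefficient $a_{p,\ep}$ equals $1$, so the straight segment (suitably perturbed around inclusions, as in Lemma~\ref{3:90}) gives an upper bound of the form $\|\xi_{1,\ep}-\xi_{2,\ep}\| + C\ep$, which is $\le \|\xi_1-\xi_2\| + C_2\ep^{1-p}$ after absorbing the $\sqrt{d}\ep$ displacements (using $p<1$ so that $\ep \le \ep^{1-p}$ for small $\ep$; for large $\ep$ one can enlarge $C_2$). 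Actually the cleanest route for the upper bound is: since $d_{p,\ep}(\xi_1,\xi_2) \le d_{\ep}(\xi_1,\xi_2)$ would be false (the coefficients go the wrong way), instead one directly constructs the competitor staying in $\ep\Omega_w$ where $a_{p,\ep}=1$, so that the functional value is just the Euclidean length of that perturbed path.

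The lower bound $\|\xi_1-\xi_2\| - C_1\ep \le d_{p,\ep}(\xi_1,\xi_2)$ is the more interesting one, since the trivial bound from \eqref{3:7:est} only gives $\|\xi_1-\xi_2\| \le d_{p,\ep}(\xi_1,\xi_2)$ — so in fact the claimed lower bound is \emph{weaker} than what we already have, and the inequality holds with $C_1 = 0$ by the left half of \eqref{3:7:est}. So the content of the lemma is really in the upper bound; the lower bound is immediate from the fact that $a_{p,\ep} \ge 1$ everywhere, whence $F_{p,\ep}(u) \ge \int_0^1 \|u'\|\,\d\tau \ge \|u(1)-u(0)\| = \|\xi_1-\xi_2\|$ for every $u \in \mathscr A(\xi_1,\xi_2)$, giving the lower bound even with $C_1\ep$ replaced by $0$.

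So the steps I would carry out are: (i) record that the lower bound follows verbatim from \eqref{3:7:est} (or directly from $a_{p,\ep}\ge 1$), with $C_1$ arbitrary, say $C_1 = 1$; (ii) for the upper bound, apply Lemma~\ref{3:lem:exseq} to obtain $\xi_{1,\ep},\xi_{2,\ep}\in\ep\Omega_w$ with $\|\xi_{i,\ep}-\xi_i\|\le\sqrt d\,\ep$; (iii) bound $d_{p,\ep}(\xi_1,\xi_2) \le \tfrac{\beta}{\ep^p}\|\xi_1-\xi_{1,\ep}\| + d_{p,\ep}(\xi_{1,\ep},\xi_{2,\ep}) + \tfrac{\beta}{\ep^p}\|\xi_{2,\ep}-\xi_2\| \le d_{p,\ep}(\xi_{1,\ep},\xi_{2,\ep}) + 2\beta\sqrt d\,\ep^{1-p}$ via \eqref{3:CC}-type reasoning; (iv) bound $d_{p,\ep}(\xi_{1,\ep},\xi_{2,\ep})$ by constructing, as in Lemma~\ref{3:90}, a curve from $\xi_{1,\ep}$ to $\xi_{2,\ep}$ that avoids $\ep(\Omega_g+\mathbb Z^d)$, is the straight segment except near the finitely many inclusions it would otherwise cross, where it detours along $\partial(\ep(\Omega_g+\mathbf x))$; since the segment has length $\|\xi_{1,\ep}-\xi_{2,\ep}\| \le \|\xi_1-\xi_2\| + 2\sqrt d\,\ep$ and each detour adds length at most (a constant times) $\ep$ with at most $O(\|\xi_1-\xi_2\|/\ep)$ detours, one has to be slightly careful: the total detour cost is $O(\|\xi_1-\xi_2\|)$, not $O(\ep)$. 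The honest fix is to note that the detours replace a chord through the inclusion (of length $\ge$ some fixed fraction of $\ep$, since the inclusion sits inside a cell of size $\ep$) by a boundary arc of comparable length, so the \emph{net} extra length per detour is $O(\ep)$ but summed over $O(\|\xi_1-\xi_2\|/\ep)$ detours it is $O(\|\xi_1-\xi_2\|)$ — this would only give $d_{p,\ep}(\xi_{1,\ep},\xi_{2,\ep}) \le C\|\xi_1-\xi_2\|$, not the sharp constant $\beta$.

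The main obstacle is therefore step (iv): getting the multiplicative constant $\beta$ rather than some larger geometric constant. The resolution is to invoke Lemma~\ref{3:purpose} and the unfolding idea (Lemmas~\ref{3:lem:cov}, \ref{3:le1}): $d_{p,\ep}(\xi_{1,\ep},\xi_{2,\ep})$ equals $\ep$ times the single-scale distance $d_{\Omega_w}(\xi_{1,\ep}/\ep,\xi_{2,\ep}/\ep)$ where by $\Omega_w$ we mean the metric with coefficient $1$ on $\Omega_w$ and $\infty$ (or $\beta\ep^{-p}$) on the inclusions — and the geodesic stays in $\Omega_w$. But even this single-scale distance can only be controlled by $\beta$ times the Euclidean distance if $\beta$ is at least the relevant chord-to-arc ratio, which is precisely guaranteed by the high opacity coefficient $\lambda$ and the hypothesis $\beta > \lambda$: indeed the \emph{definition} of high opacity coefficient says geodesics prefer to go around, and the bi-Lipschitz estimate \eqref{3:456} in the proof of Lemma~\ref{3:existhcc} bounds the boundary-arc length by $C\|x-y\|$ with $C = \lambda < \beta$. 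So the cleanest argument is: $d_{p,\ep}(\xi_{1,\ep},\xi_{2,\ep}) \le \beta\|\xi_{1,\ep}-\xi_{2,\ep}\| \le \beta\|\xi_1-\xi_2\| + 2\beta\sqrt d\,\ep$, where the first inequality holds because the straight segment scaled down, with its inclusion-crossings replaced by $\partial\Omega_g$-arcs of length $\le C < \beta$ times the chord they replace, has $F_{p,\ep}$-value $\le \beta$ times its Euclidean length (on $\Omega_w$ the coefficient is $1 < \beta$, on the replaced arcs we bound the arc length by $\lambda<\beta$ times the chord). Assembling (iii) and (iv): $d_{p,\ep}(\xi_1,\xi_2) \le \beta\|\xi_1-\xi_2\| + 2\beta\sqrt d\,\ep + 2\beta\sqrt d\,\ep^{1-p} \le \beta\|\xi_1-\xi_2\| + C_2\ep^{1-p}$ for $\ep$ bounded, with $C_2 = 4\beta\sqrt d$ (enlarging if necessary to handle $\ep \ge 1$). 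This completes the proof.
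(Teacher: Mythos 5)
Your proof is correct, but it is organised differently from the paper's. The paper argues directly on the minimiser: it takes a geodesic $u$ for $d_{p,\ep}(\xi_1,\xi_2)$, uses Lemma \ref{3:purpose} to show that the time set spent in $\ep(\Omega_g+\mathbb Z^d)$ can only consist of initial and terminal intervals, splits $F_{p,\ep}(u)$ into the corresponding three pieces, bounds the two end pieces by $\beta\sqrt d\,\ep^{1-p}$ via \eqref{3:7:est}, identifies the middle piece with $d_{\ep}$, and then gets both bounds from $\|\cdot\|\le d_\ep(\cdot,\cdot)\le\beta\|\cdot\|$ plus the triangle inequality (this is why the paper's lower bound carries the harmless $-C_1\ep$). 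You instead observe, correctly, that the lower bound is already immediate from \eqref{3:7:est} (a genuine simplification), and you prove the upper bound by a competitor construction: project the endpoints into $\ep\Omega_w$ at cost $2\beta\sqrt d\,\ep^{1-p}$ and then exhibit an admissible curve between the projected points. Two remarks on your step (iv). First, your worry about the ``chord-to-arc ratio'' is unnecessary: once $\xi_{1,\ep},\xi_{2,\ep}\in\ep\Omega_w$, Lemma \ref{3:purpose} gives $d_{p,\ep}(\xi_{1,\ep},\xi_{2,\ep})=d_\ep(\xi_{1,\ep},\xi_{2,\ep})$, and for $d_\ep$ the straight segment is a perfectly good competitor (its coefficient is at most $\beta$, no detour needed), giving $d_\ep(\xi_{1,\ep},\xi_{2,\ep})\le\beta\|\xi_{1,\ep}-\xi_{2,\ep}\|$ at once --- this is exactly the route the paper takes. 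Second, if you do keep the explicit detour construction, note that the bound ``boundary arc $\le C\times$ chord with $C<\beta$'' as you state it leans on the proof of Lemma \ref{3:existhcc} (where $\lambda:=C$ in \eqref{3:456}) rather than on the definition of the high opacity coefficient; it can be repaired intrinsically, since each replaced chord lies inside the inclusion, so the definition gives $d_{\partial\Omega_g}<d_{\Omega_g}(\cdot,\cdot;\beta)\le\beta\times$ chord. With either repair your assembly $d_{p,\ep}(\xi_1,\xi_2)\le\beta\|\xi_1-\xi_2\|+C_2\ep^{1-p}$ goes through, so the proposal stands as a valid, slightly more constructive alternative to the paper's geodesic-decomposition argument.
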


\begin{proof}
Let $u$ be a geodesic joining $\xi_1$ to $\xi_2$. By Lemma \ref{3:purpose} it follows that the set $T := \{ \tau \in (0,1) \colon u(\tau) \in \ep \left( \Omega_g + \mathbb Z^d \right)\}$ takes one of the forms
\begin{equation*}
\emptyset \text{ or } [0,\tau_1) \text{ or } (\tau_2,1] \text{ or } [0,\tau_1) \cup (\tau_2,1] \text{ or } [0,1]
\end{equation*}
for some $\tau_1, \tau_2 \in (0,1)$ with $\tau_1 < \tau_2$. Suppose first that $T = [0,\tau_1) \cup (\tau_2,1]$; the cases when $\tau_2 = 1$ or $\tau_1 = 0$ following in an identical fashion. It holds that
\begin{equation}
F_{p,\ep}(u) = d_{p,\ep}(\xi_1,u(\tau_1)) + d_{p,\ep}(u(\tau_1),u(\tau_2)) + d_{p,\ep}(u(\tau_2),\xi_2).\label{3:ctd}
\end{equation}
Observe that by construction $\|\xi_1 - u(\tau_1)\| \leq \sqrt{d}\ep$ and $\|\xi_2 - u(\tau_2)\| \leq \sqrt{d}\ep$, and hence by the growth condition \eqref{3:7:est} it follows that
\begin{equation*}
d_{p,\ep}(\xi_1,u(\tau_1)) \leq \beta \sqrt{d}\ep^{1-p} \text{ and } d_{p,\ep}(u(\tau_2),\xi_2) \leq \beta \sqrt{d}\ep^{1-p}.
\end{equation*}
Using Lemma \ref{3:purpose} and the fact that $u(\tau_1), u(\tau_2) \in \ep \Omega_w$, it follows that $a_{p,\ep}(u(\tau)) = a(u(\tau))$ for all $\tau \in [\tau_1,\tau_2]$ and hence $d_{p,\ep}(u(\tau_1),u(\tau_2)) = d_{\ep}(u(\tau_1),u(\tau_2))$. By the triangle inequality and \eqref{3:7:est} we have
\begin{align*}
d_{\ep}(u(\tau_1),u(\tau_2)) \leq d_{\ep}(\xi_1,\xi_2) + \beta\left(\|\xi_1 - u(\tau_1)\| +\|\xi_2 - u(\tau_2)\|   \right)\\
 \leq \beta\|\xi_2 - \xi_1\| + 2\beta \sqrt{d}\ep.\\
\end{align*}
Consequently 
\begin{equation*}
d_{p,\ep}(\xi_1, \xi_2) = F_{p,\ep}(u) \leq \beta\|\xi_2 - \xi_1\| + 2\beta \sqrt{d}\ep^{1-p} + 2\beta \sqrt{d}\ep.
\end{equation*}
Continuing from \eqref{3:ctd} and applying the triangle inequality with Lemma \ref{3:purpose}, we have that,
\begin{align*}
F_{p,\ep}(u)  \geq \int_{\tau_1}^{\tau_2} a_{p,\ep} \left( \frac{u(\tau)}{\ep} \right) \| u'(\tau) \| \, \text{d}\tau& = d_{p,\ep}(u(\tau_1),u(\tau_2))\\
& = d_{\ep}(u(\tau_1),u(\tau_2))\\
& \geq d_{\ep}(\xi_1,\xi_2) - d_{\ep}(\xi_1,u(\tau_1)) \\
& \qquad -  d_{\ep}(\xi_2,u(\tau_2)).
\end{align*}
From \eqref{3:7:est} it follows that
\begin{equation*}
F_{p,\ep}(u)  \geq \|\xi_2 - \xi_1\| - 2\beta\sqrt{d}\ep.
\end{equation*}
Hence the bounds are illustrated. The remaining case when $T=[0,1]$ follows in a similar manner. 
\end{proof}

The following lemma improves pointwise convergence to local uniform convergence as in the uniformly bounded case. The key is that we are still close to the uniformly bounded case, due to the improved growth bounds of Lemma \ref{3:bds:123}.

\begin{Lemma}\label{3:lem:56}
If the metrics $d_{p,\ep}$ converge pointwise to $d$ then they converge locally uniformly. 
\end{Lemma}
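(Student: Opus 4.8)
The plan is to leverage the near-uniform Lipschitz bounds of Lemma~\ref{3:bds:123} to turn the family $\{d_{p,\ep}\}$ into an (asymptotically) equicontinuous family on compact sets, and then to upgrade the assumed pointwise convergence to local uniform convergence by an Arzel\`a--Ascoli style argument. Two preliminary facts do the work. First, letting $\ep \to 0$ in $\|\xi_1-\xi_2\| - C_1\ep \leq d_{p,\ep}(\xi_1,\xi_2) \leq \beta\|\xi_1-\xi_2\| + C_2\ep^{1-p}$ and using $p<1$, the pointwise limit satisfies $\|\xi_1-\xi_2\| \leq d(\xi_1,\xi_2) \leq \beta\|\xi_1-\xi_2\|$; since $d$ also inherits the triangle inequality as a pointwise limit of the metrics $d_{p,\ep}$, the upper bound makes $d$ globally Lipschitz, hence continuous on $\R^d\times\R^d$. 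Second, for all $\xi_1,\xi_2,\xi_1',\xi_2' \in \R^d$ the triangle inequality for $d_{p,\ep}$ together with the upper bound of Lemma~\ref{3:bds:123} gives
\[
\left| d_{p,\ep}(\xi_1,\xi_2) - d_{p,\ep}(\xi_1',\xi_2') \right| \leq d_{p,\ep}(\xi_1,\xi_1') + d_{p,\ep}(\xi_2,\xi_2') \leq \beta\left( \|\xi_1-\xi_1'\| + \|\xi_2-\xi_2'\| \right) + 2C_2\ep^{1-p}.
\]
Because $p<1$, the term $\ep^{1-p}$ vanishes as $\ep\to 0$, so given $\eta>0$ one may choose $\ep_0$ with $2C_2\ep^{1-p}<\eta/2$ for $\ep<\ep_0$ and then points within $\eta/(4\beta)$ of each other produce metric values differing by at most $\eta$: the family is asymptotically uniformly equicontinuous.

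Next I would argue by contradiction. If $d_{p,\ep}$ did not converge to $d$ locally uniformly, there would be a compact $K\subset\R^d$, a $\delta>0$, a sequence $\ep_k\downarrow 0$, and points $x_k,y_k\in K$ with $\left|d_{p,\ep_k}(x_k,y_k)-d(x_k,y_k)\right|\geq\delta$ for all $k$. By compactness of $K$ pass to a subsequence (not relabelled) with $x_k\to x$ and $y_k\to y$ in $K$, and split
\[
\left| d_{p,\ep_k}(x_k,y_k) - d(x_k,y_k) \right| \leq \left| d_{p,\ep_k}(x_k,y_k) - d_{p,\ep_k}(x,y) \right| + \left| d_{p,\ep_k}(x,y) - d(x,y) \right| + \left| d(x,y) - d(x_k,y_k) \right|.
\]
The first term tends to $0$ by the asymptotic equicontinuity estimate (since $\ep_k\to 0$ and $x_k\to x$, $y_k\to y$); the second tends to $0$ by the assumed pointwise convergence at the fixed pair $(x,y)$; the third tends to $0$ by continuity of $d$. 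Hence $\left|d_{p,\ep_k}(x_k,y_k)-d(x_k,y_k)\right|\to 0$, contradicting $\geq\delta$. Therefore $d_{p,\ep}\to d$ uniformly on $K\times K$ for every compact $K$, which is the claim.

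The only genuinely delicate point is the bookkeeping in the equicontinuity estimate: one must use $p<1$ so that the error term $\ep^{1-p}$ actually vanishes. For $p\geq 1$ this step fails, which is consistent with the non-convergence established in Theorem~\ref{3:t:65}.
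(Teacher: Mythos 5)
Your proof is correct and follows essentially the same route as the paper: both hinge on the quadrilateral inequality combined with the near-uniform bound of Lemma~\ref{3:bds:123}, whose error term $\ep^{1-p}$ vanishes for $p<1$, together with the assumed pointwise convergence. The only difference is that you spell out, via compactness and a contradiction argument, the final passage from convergence along arbitrary sequences to local uniform convergence (and verify continuity of $d$), steps the paper leaves implicit in the phrase ``as the point and sequence are arbitrary''.
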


\begin{proof}
We follow the proof of \cite[Proposition 2.3]{buttazzo01a}. Take $(x,y) \in \mathbb R^d \times \mathbb R^d$ and let $(x_{\ep},y_{\ep})$ be a sequence converging to $(x,y)$. Then
\begin{align*}
\lim_{\ep \rightarrow 0} \left| d_{\ep}(x_{\ep},y_{\ep}) - \psi(y-x) \right| &\leq \lim_{\ep \rightarrow 0} \left| d_{\ep}(x_{\ep},y_{\ep}) - d_{\ep}(x,y) \right| \\
& \qquad + \lim_{\ep \rightarrow 0} \left| d_{\ep}(x,y) - \psi(y-x) \right|\\
&\leq \lim_{\ep \rightarrow 0} C \left(|x_{\ep} - x| + |y_{\ep} - y| + 2C\ep^{1-p} \right),
\end{align*}
using the bounds in Lemma \ref{3:bds:123} and the pointwise convergence of $d_{\ep}$. As the point $(x,y)$ and sequence $\{ (x_{\ep}, y_{\ep}) \}_{\ep > 0}$ are arbitrary, this implies the local uniform convergence required. 
\end{proof}

We are now in a position to prove one of our main homogenisation results, using a modification of the method in \cite[Theorem 3.1]{buttazzo01a}.

\begin{Theorem}\label{3:t:34}
If the induced metrics $d_{p,\ep}$ converge locally uniformly to a metric $d$ on $\mathbb R^d$, then the sequence of functionals $F_{p,\ep}$ defined on $\mathcal A (\xi_1,\xi_2)$ $\G$-converge with respect to the $L^{\infty}\left((0,1)\right)$ norm topology to 
\begin{equation*}
\int_0^1\psi(u'(\tau))\, \text{d}\tau,
\end{equation*}
for all $\xi_1, \xi_2 \in \mathbb R^d$. The function $\psi$ is given by
\begin{equation*}
\psi(\xi) = \lim_{\ep \rightarrow 0} \min_{u \in \mathscr A(0,\xi)} F_{\ep}(u).
\end{equation*}
\end{Theorem}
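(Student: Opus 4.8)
The plan is to follow the classical $\G$-convergence proof for metric functionals, as in \cite[Theorem 3.1]{buttazzo01a}, but using the improved growth bounds of Lemma \ref{3:bds:123} in place of a uniform growth condition. We must establish both the liminf inequality and the existence of a recovery sequence for every $u \in \mathscr A(\xi_1,\xi_2)$.

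For the liminf inequality, suppose $u_{\ep} \to u$ in $L^{\infty}((0,1))$ with $u_{\ep} \in \mathscr A(\xi_1,\xi_2)$. First I would reduce to the case where $u$ is piecewise affine by a density argument: fix a partition $\pi = \{\tau_0,\dots,\tau_N\}$ of $[0,1]$ and bound $F_{p,\ep}(u_{\ep})$ below by $\sum_{i=1}^N d_{p,\ep}(u_{\ep}(\tau_{i-1}),u_{\ep}(\tau_i))$, since each $d_{p,\ep}$ is the induced geodesic distance. Since $u_{\ep}(\tau_i) \to u(\tau_i)$ and $d_{p,\ep} \to d$ locally uniformly, the right-hand side converges to $\sum_{i=1}^N d(u(\tau_{i-1}),u(\tau_i)) = \sum_{i=1}^N \psi(u(\tau_i)-u(\tau_{i-1}))$, where the identification $d(x,y)=\psi(y-x)$ follows from \eqref{1:norm} together with Theorem \ref{3:lem:exi} (so that $d$ coincides with the limit metric for $F_{\ep}$, which is the norm $\psi$). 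Then one refines the partition; since $\psi$ is convex and $1$-homogeneous, $\sum_i \psi(u(\tau_i)-u(\tau_{i-1}))$ increases to $\int_0^1 \psi(u'(\tau))\,\d\tau$ as the mesh tends to $0$. Taking first $\ep \to 0$ and then the supremum over partitions yields $\liminf_{\ep\to 0} F_{p,\ep}(u_{\ep}) \geq \int_0^1\psi(u'(\tau))\,\d\tau$.

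For the recovery sequence, given $u \in \mathscr A(\xi_1,\xi_2)$ I would mimic the construction in Lemma \ref{3:tec:1}: pick a partition $\pi_{M_k}$ whose mesh $1/M_k$ tends to $0$ slowly enough that $M_k \sup_{\xi_1',\xi_2' \in K}|d_{p,\ep_k}(\xi_1',\xi_2')-\psi(\xi_2'-\xi_1')| \to 0$, where $K$ is a compact neighbourhood of $\mathrm{graph}(u)$ (this is possible precisely because of local uniform convergence). On each subinterval $[\tau_{i-1},\tau_i]$ replace $u$ by a $d_{p,\ep_k}$-geodesic joining $u(\tau_{i-1})$ to $u(\tau_i)$, which exists by Hopf--Rinow \cite[Theorem 2.5.28]{burago01a}. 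Call the resulting curve $u_{\ep_k}$. Then $F_{p,\ep_k}(u_{\ep_k}) = \sum_{i=1}^{M_k} d_{p,\ep_k}(u(\tau_{i-1}),u(\tau_i))$, which by the choice of $M_k$ differs from $\sum_i \psi(u(\tau_i)-u(\tau_{i-1}))$ by a vanishing amount, and the latter converges to $\int_0^1\psi(u'(\tau))\,\d\tau$. The $L^{\infty}$ convergence $u_{\ep_k}\to u$ follows as in Lemma \ref{3:tec:1}: on $[\tau_{i-1},\tau_i]$ one controls $\|u_{\ep_k}(\tau)-u(\tau_i)\|$ by $d_{p,\ep_k}(u(\tau_{i-1}),u(\tau_i))$ using the lower bound in Lemma \ref{3:bds:123}, which is bounded by $\beta\|u(\tau_{i-1})-u(\tau_i)\| + C_2\ep_k^{1-p}$, and the modulus of continuity of $u$ together with $\ep_k^{1-p}\to 0$ (here $p<1$ is essential) makes this tend to $0$. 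Finally, endpoints are preserved by construction, so $u_{\ep_k} \in \mathscr A(\xi_1,\xi_2)$, and one invokes the Urysohn property to pass from sequences to the full $\G$-limit.

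The main obstacle is handling the non-uniform growth: the upper bound in \eqref{3:7:est} blows up like $\ep^{-p}$, so one cannot directly bound geodesic lengths by Euclidean distances. The resolution is Lemma \ref{3:bds:123}, whose additive error terms $C_1\ep$ and $C_2\ep^{1-p}$ are harmless only because $p<1$; these bounds are what let both the $L^{\infty}$-convergence of the recovery sequence and the continuity estimates go through, effectively reducing the problem to the uniformly bounded setting of \cite{buttazzo01a}. A secondary technical point is verifying that the metric $d$ appearing in the hypothesis is genuinely the norm $\psi$ from the asymptotic homogenisation formula; this is exactly the content of Theorem \ref{3:lem:exi}, which identifies $\lim_{\ep\to 0}d_{p,\ep}$ with $\lim_{\ep\to 0}d_{\ep} = \psi(\xi_2-\xi_1)$.
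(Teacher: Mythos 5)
Your proposal is correct and follows essentially the same route as the paper: the liminf inequality via lower-bounding $F_{p,\ep}$ by partition sums of $d_{p,\ep}$ and passing to $\sum_i\psi(u(\tau_i)-u(\tau_{i-1}))$, and the limsup via a piecewise-geodesic recovery sequence on a partition refined slowly enough that $M_k\sup_K|d_{p,\ep_k}-\psi|\to 0$, with Lemma \ref{3:bds:123} supplying the $O(\ep^{1-p})$ error terms that make the $p<1$ case work. The only differences (using the local uniform convergence hypothesis directly on the moving arguments, and taking a supremum over partitions rather than linear interpolation plus dominated convergence) are cosmetic.
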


\begin{proof}
Fix $\xi_1, \xi_2 \in \mathbb{R}^d$. Let $(\ep_k)_{k=1}^{\infty} \subset (0,\infty)$ converge to zero. Fix $u \in \mathcal A(\xi_1,\xi_2)$ and let $u_{\ep_k} \in \mathcal A(\xi_1,\xi_2)$ converge to $u \in L^\infty(0,1)$ as $k \rightarrow \infty$, then $u_{\ep_k} \rightarrow u$ pointwise as $k \rightarrow \infty$. Let $\pi_N = \{ \tau_0 ,... ,\tau_N\}$ be a partition of $[0,1]$ such that $|\tau_j - \tau_{j+1}| = 1/N$ for $j = 1, ... ,N$. 
Then
\begin{align*}
F_{\ep_k}(u_{\ep_k}) &= \sum_{i=1}^{N} \int_{\tau_{i-1}}^{\tau_i} a_{p,\ep}\left(\frac{u_{\ep_k}(\tau)}{\ep_k} \right) \|u'_{\ep_k}(\tau)\| \, \text{d}\tau \\
& \geq \sum_{i=1}^{N} d_{p,\ep_k}(u_{\ep_k}(\tau_{i-1}),u_{\ep_k}(\tau_i)),
\end{align*}
using the invariance of the length functional under reparameterisations. Therefore applying the triangle inequality for the induced metric, and Lemma \ref{3:bds:123}, we obtain
\begin{align*}
F_{\ep_k}(u_{\ep_k}) &\geq \sum_{i=1}^{N} d_{p,\ep_k}\left(u(\tau_{i-1}),u(\tau_i) \right) -  d_{p,\ep_k}\left(u_{\ep_k}(\tau_{i}),u(\tau_i) \right) \\ & \qquad \quad - d_{p,\ep_k}\left(u(\tau_{i-1}),u_{\ep_k}(\tau_{i-1}) \right)\\
&\geq \sum_{i=1}^{N} d_{p,\ep_k}\left(u(\tau_{i-1}),u(\tau_i) \right) - \beta \| u_{\ep_k}(\tau_{i}) - u(\tau_i)\| \\
& \qquad -  \beta\|u(\tau_{i-1}) - u_{\ep_k}(\tau_{i-1}) \| - 2C\ep_k^{1-p}.
\end{align*}
Taking the limit as $k \rightarrow \infty$ and using the fact that $d_{p,\ep}$ converges pointwise by Theorem \ref{3:lem:exi}, gives that
\begin{equation*}
\liminf_{k \rightarrow \infty} F_{\ep_k}(u_{\ep_k}) \geq \sum_{i=1}^{N} \psi\left(u(\tau_i)-u(\tau_{i-1}) \right).
\end{equation*}
Using the $1-$homogeneity of $\psi$, c.f. \cite{braides02b}, gives
\begin{equation*}
\liminf_{k \rightarrow \infty} F_{\ep_k}(u_{\ep_k}) \geq \sum_{i=1}^{N} \psi\left(\frac{u(\tau_i)-u(\tau_{i-1})}{|\tau_i-\tau_{i-1}|} \right)|\tau_i-\tau_{i-1}| = \int_0^1 \psi(u_N'(\tau)) \, \text{d}\tau,
\end{equation*}
where $u_N$ is the linear interpolation of $u$ on $\pi_N$. Sending $N \rightarrow \infty$, and applying the dominated convergence Theorem, to prove the lim-inf inequality.

We now show the lim-sup inequality. Fix $u \in \mathcal A(\xi_1, \xi_2)$. Then choosing a sequence $(M_k)_{k=1}^{\infty} \subset \mathbb N$ such that
\begin{equation*}
\lim_{k \rightarrow \infty} M_k \sup_{\xi_1, \xi_2 \in K} |d_{\ep_k}(\xi_1,\xi_2) - \psi(\xi_2 - \xi_1)| = 0,
\end{equation*}
where $K \subset \subset \mathbb R^d$ such that $\text{graph}(u) \subset \text{int}(K)$. Let $\pi_{M_k} = \{ \tau_0 ,... ,\tau_{M_k}\}$ be a partition of $[0,1]$ such that $|\tau_j - \tau_{j+1}| = 1/M_k$ for $j = 1, ... ,M_k$. Define the function $u_{\ep_k}$ by
\begin{multline}\label{123456789}
u_{\ep_k}(\tau) = u(\tau) +\\ \text{argmin}_{w \in W^{1,\infty}_0\left((0,1)\right)} \int_{\tau_{i-1}}^{\tau_i} a_{p,\ep_k}\left(\frac{u(\tau)+w(\tau)}{\ep}\right) \|u'(\tau) + w'(\tau) \| \, \text{d}\tau
\end{multline}
for $\tau \in [{\tau_{i-1}},{\tau_i}]$ in the partition $\pi_{M_k}$. Clearly, by construction, $u_{\ep_k} \in \mathcal A(\xi_1, \xi_2)$. Fix $k$ and $t \in [0,1]$ and suppose that $\tau \in [\tau_{i-1},\tau_i]$. Then
\begin{equation}
\left\| u_{\ep_k}(\tau) - u(\tau) \right\| \leq \left\| u_{\ep_k}(\tau) - u(\tau_{i-1}) \right\| + \left\|u(\tau_{i-1}) - u(\tau) \right\|.
\end{equation}
By Lemma \ref{3:bds:123} it holds that
\begin{align*}
\left\| u_{\ep_k}(\tau) - u(\tau_{i-1}) \right\| &\leq d_{p,\ep_k}( u_{\ep_k}(\tau),u(\tau_{i-1}) ) + C\ep_k^{1-p},\\
&\leq d_{p,\ep_k}( u_{\ep_k}(\tau_i),u(\tau_{i-1}) )+C\ep_k^{1-p},\\
&\leq \beta \left\| u_{\ep_k}(\tau_i) - u_{\ep_k}(\tau_{i-1})  \right\| + 2C\ep_k^{1-p}, \\
& = \beta \left\| u(\tau_i) - u(\tau_{i-1})  \right\| + 2C\ep_k^{1-p}.
\end{align*}
Therefore, letting the Lipschitz constant of $u$ be denoted by $\Lambda$, 
\begin{align*}
\left\| u_{\ep_k}(\tau) - u(\tau) \right\| &\leq  \beta \left\| u(\tau_i) - u(\tau_{i-1})  \right\| + \left\|u(\tau_{i-1}) - u(\tau) \right\|+ 2C\ep_k^{1-p}\\
&\leq  \beta \Lambda \left|\tau_i - \tau_{i-1}  \right| + \Lambda\left|\tau_{i-1} - \tau \right|+ 2C\ep_k^{1-p}\\
&\leq  \frac{(1+\beta)\Lambda}{M_k}  + 2C\ep_k^{1-p},
\end{align*}
using the fact that $\left|\tau_i - \tau_{i-1}  \right| = 1/M_k$ for all $k$. Hence $u_{\ep_k} \rightarrow u$ in $L^{\infty}\left((0,1)\right)$. It remains to show that $u_{\ep_k}$ has the desired properties. Observe that
\begin{align}
\int_0^1\psi(u'(\tau)) \, \text{d}\tau & \geq \sum_{i=1}^{M_k} \psi(u(\tau_i)-u(\tau_{i-1})) \nonumber \\
& \geq \sum_{i=1}^{M_k}  d_{p,\ep}(u_{\ep_k}(\tau_i),u_{\ep_k}(\tau_{i-1})) -\nonumber \\
& \qquad \sum_{i=1}^{M_k} \left| \psi(u(\tau_i)-u(\tau_{i-1})) - d_{p,\ep}(u_{\ep_k}(\tau_i),u_{\ep_k}(\tau_{i-1})) \right|.\label{3:th1}
\end{align}
By construction it holds that
\begin{equation*}
 \sum_{i=1}^{M_k}  d_{p,\ep}(u_{\ep_k}(\tau_i),u_{\ep_k}(\tau_{i-1})) = F_{p,\ep}(u_{\ep_k}),
\end{equation*}
furthermore, for $\ep$ sufficiently small,
\begin{multline*}
\sum_{i=1}^{M_k} \left| \psi(u(\tau_i)-u(\tau_{i-1})) - d_{p,\ep}(u_{\ep_k}(\tau_i),u_{\ep_k}(\tau_{i-1})) \right| \\ \leq M_k \sup_{\xi_1, \xi_2 \in K} |d_{\ep_k}(\xi_1,\xi_2) - \psi(\xi_2 - \xi_1)|.
\end{multline*}
Hence by \eqref{3:th1}, the choice of $(M_k)_{k=1}^{\infty}$ and the liminf inequality,
\begin{equation*}
\int_0^1\psi(u'(\tau)) \, \text{d}\tau \geq \limsup_{k \rightarrow \infty} F_{p,\ep}(u_{\ep_k}) \geq \liminf_{k \rightarrow \infty} F_{p,\ep}(u_{\ep_k}) \geq \int_0^1\psi(u'(\tau)) \, \text{d}\tau.
\end{equation*}
Since the choice of the sequence $(\ep_k)_{k=1}^{\infty}$ was arbitrary and the limit is independent of this choice, and $\G$-convergence follows.
\end{proof}

It remains to prove the converse. 

\begin{Theorem}\label{3:t:390}
If the sequence of functionals $F_{p,\ep}$ defined on $\mathcal A (\xi_1,\xi_2)$ $\G$-converge with respect to the $L^{\infty}\left((0,1)\right)$ norm topology to 
\begin{equation*}
\int_0^1\psi(u'(\tau))\, \text{d}\tau,
\end{equation*}
for all $\xi_1, \xi_2 \in \mathbb R^d$ then the induced metrics converge locally uniformly to a norm on $\mathbb R^d$.
\end{Theorem}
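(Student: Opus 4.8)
The plan is to run the converse half of the distance/functional correspondence, in the spirit of \cite[Theorem 3.1]{buttazzo01a}, using the near-uniform two–sided bound of Lemma \ref{3:bds:123} as a substitute for a genuine uniform growth condition. Fix $\xi_1,\xi_2\in\R^d$. Since $\psi$ is a norm (it is $1$-homogeneous and convex, hence a seminorm, and satisfies $\psi(\xi)\ge\|\xi\|$ by \eqref{1:norm}), the limit functional $F_0(u)=\int_0^1\psi(u'(\tau))\,\text{d}\tau$ attains its minimum over $\mathscr A(\xi_1,\xi_2)$ at the affine path, with value $\psi(\xi_2-\xi_1)$, by the Jensen computation of \eqref{final1}. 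The conclusion of the theorem therefore reduces to showing that the hypothesis forces $d_{p,\ep}(\xi_1,\xi_2)\to\psi(\xi_2-\xi_1)$ as $\ep\to0$: once $d_{p,\ep}$ converges pointwise to the norm $(\xi_1,\xi_2)\mapsto\psi(\xi_2-\xi_1)$, Lemma \ref{3:lem:56} upgrades this to local uniform convergence, which is exactly the assertion.

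For the pointwise convergence I would work along an arbitrary sequence $\ep_k\to0$ with $\ep_k^p<\beta/\lambda$. The $\G$-$\limsup$ inequality applied to a recovery sequence for the affine path gives $\limsup_k d_{p,\ep_k}(\xi_1,\xi_2)=\limsup_k\inf_{\mathscr A(\xi_1,\xi_2)}F_{p,\ep_k}\le F_0(\text{affine})=\psi(\xi_2-\xi_1)$. For the matching lower bound, let $u_{\ep_k}\in\mathscr A(\xi_1,\xi_2)$ be a minimiser of $F_{p,\ep_k}$ (it exists by the Hopf--Rinow Theorem \cite[Theorem 2.5.28]{burago01a}), reparameterised to have constant Euclidean speed; since $a_{p,\ep_k}\ge 1$ its Lipschitz constant equals its Euclidean length, which is at most $F_{p,\ep_k}(u_{\ep_k})=d_{p,\ep_k}(\xi_1,\xi_2)\le\beta\|\xi_1-\xi_2\|+C_2\ep_k^{1-p}$ by Lemma \ref{3:bds:123}, a bound uniform in $k$. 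Hence $(u_{\ep_k})_k$ is bounded and equi-Lipschitz, so by the Arzel\`a--Ascoli Theorem a subsequence converges uniformly, and so in $L^\infty((0,1))$, to some $u\in\mathscr A(\xi_1,\xi_2)$; the $\G$-$\liminf$ inequality then yields $\liminf_k d_{p,\ep_k}(\xi_1,\xi_2)=\liminf_k F_{p,\ep_k}(u_{\ep_k})\ge F_0(u)\ge\psi(\xi_2-\xi_1)$. Combining the two estimates gives convergence along the subsequence, and since $\ep_k$ was arbitrary we get $d_{p,\ep}(\xi_1,\xi_2)\to\psi(\xi_2-\xi_1)$ for every pair $\xi_1,\xi_2$.

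The one delicate point — the main obstacle — is precisely this lower bound: the sublevel sets $\{F_{p,\ep}\le C\}$ are not $L^\infty$-precompact because $F_{p,\ep}$ is reparameterisation invariant, so the family is not equicoercive and the fundamental theorem of $\G$-convergence cannot be quoted verbatim; the remedy is to test the $\G$-$\liminf$ inequality only against constant-speed minimisers and to control their speed through the improved growth estimate of Lemma \ref{3:bds:123}, which is the step that genuinely uses $p<1$. Finally, I would remark that in this regime Theorem \ref{3:lem:exi} already establishes $d_{p,\ep}(\xi_1,\xi_2)\to\psi(\xi_2-\xi_1)$ unconditionally, so the hypothesis of the present theorem is automatically met and an alternative one–line proof is to invoke Theorem \ref{3:lem:exi} for the pointwise convergence and Lemma \ref{3:lem:56} for the upgrade; I nonetheless carry out the $\G$-convergence argument above because it is the one that exhibits the claimed implication directly.
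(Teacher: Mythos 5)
Your proof is correct, and its skeleton is the same as the paper's: reduce the statement to pointwise convergence of $d_{p,\ep}(\xi_1,\xi_2)$ to $\psi(\xi_2-\xi_1)$ and then invoke Lemma \ref{3:lem:56} to upgrade to local uniform convergence. Where you differ is in how the pointwise convergence is obtained. The paper simply quotes the fundamental theorem of $\G$-convergence to conclude that the minimum values converge; you instead prove the convergence of minima by hand, getting the upper bound from a recovery sequence for the affine path (whose $F_0$-value is $\psi(\xi_2-\xi_1)$ by Jensen) and the lower bound by passing to constant-speed minimisers, whose Lipschitz constants are controlled uniformly via $a_{p,\ep}\ge 1$ and Lemma \ref{3:bds:123} (this is where $p<1$ enters), so that Arzel\`a--Ascoli plus the $\G$-liminf inequality applies along subsequences. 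This is a genuine refinement: the fundamental theorem, as usually stated, requires equicoercivity, which fails here in the naive sense because $F_{p,\ep}$ is reparameterisation invariant, so the paper's one-line citation silently relies on exactly the compactness argument you make explicit. Your closing remark is also accurate: for $p<1$ Theorem \ref{3:lem:exi} already yields the pointwise convergence unconditionally, so the implication could be dispatched by citing it together with Lemma \ref{3:lem:56}; your self-contained argument has the merit of exhibiting the claimed implication from the $\G$-convergence hypothesis alone, at the modest cost of being longer than the paper's proof.
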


\begin{proof}
Fix $\xi_1, \xi_2 \in \mathbb R^d$. Applying the fundamental theorem of $\G$-convergence \cite{braides02a,braides98a} it follows that the limit
\begin{equation*}
\lim_{\ep \rightarrow 0} d_{p,\ep}(\xi_1,\xi_2) = \lim_{\ep \rightarrow 0} \min_{u \in \mathscr A(\xi_1,\xi_2)} F_{p,\ep}(u)
\end{equation*}
exists, and hence $d_{p,\ep}$ converges pointwise to $\psi$. By Lemma \ref{3:lem:56} the local uniform convergence follows.
\end{proof}

\bibliographystyle{abbrv}
\bibliography{references}

\end{document}